\theoremstyle{plain}
	\newtheorem{Theo}{Theorem}[section] 
	\newtheorem{Prop}[Theo]{Proposition}
	\newtheorem{Rema}[Theo]{Remark}
\theoremstyle{definition}
\theoremstyle{remark}
\def\NN{{\mathbb N}}    
\def\RR{{\mathbb R}}    
\def\PP{{\mathbb P}}     
\def\EE{{\mathbb E}}    
\def\dt{{\mathrm d}}
\def\tL{{\mathtt L}}
\def\tP{{\mathtt P}}
\def\expo{{\mathrm e}}
\def\Hun{{(\mathcal{H}_1)}}
\def\Hdeux{{(\mathcal{H}_2)}}
\def\Htrois{{(\mathcal{H}_3)}}
\def\Hquatre{{(\mathcal{H}_4)}}
\def\normx{{\lvert x \lvert}}
\def\normv{{\lvert v \lvert}}
\def\xscalv{{\frac{x\cdot v}{\lvert x \lvert}}}
\def\BState{\State\hskip-\ALG@thistlm}
\begin{document}
\title{Long-time behaviour of generalized Zig-Zag process}

\author{Ninon F\'{e}tique \footnotemark[1]}
\date{ }

\footnotetext[1]{Laboratoire de Math\'ematiques et Physique Th\'eorique (UMR CNRS 7350), F\'ed\' eration Denis Poisson (FR CNRS
2964), Universit\'e Fran\c cois-Rabelais, Parc de Grandmont, 37200 Tours, France. Email: ninon.fetique@lmpt.univ-tours.fr}

\maketitle

\begin{abstract}
We study the long-time behaviour of a class of piecewise-deterministic Markov processes which are an extension of some recent works. These $d$-dimensional processes, $d\geq 1$, can especially be used to model the motion of a bacterium in presence of a chemo-attractant, with parameters depending both on the position and the velocity of the bacterium. Using the method of Meyn and Tweedie (\cite{MT93a,MT93b}), we show that under some good assumptions on the parameters of the model, such a process converges exponentially fast towards its invariant measure. We also establish the existence of exponential moments of the invariant measure using results on semi-regenerative processes. The one-dimensional case is studied separately since complementary results can be obtained in that particular case.

\end{abstract}

\begin{flushleft}
\textbf{MSC Classification 2010:} 60F17, 60J05, 60J25, 60J75. \\ 
\textbf{Key words:} Piecewise-deterministic Markov process, long time behaviour, chemotaxis model.
\end{flushleft}

\tableofcontents

\bigskip

\section{Introduction}\label{Section introduction}   

Piecewise-deterministic Markov processes (PDMPs) have been introduced by Davis (\cite{Dav}) to distinguish these particular processes from diffusive processes. They are the subject of much current work in various domains, since they are a simple alternative to diffusions to model stochastic systems (see \cite{ABGKZ} for an overview of recent results on PDMPs). In this paper, we study a PDMP that comes from biological modeling for the motion of flagellated bacteria which are in presence of a chemo-attractant. The motion of such a bacterium has been described as run-and-tumble, which means that the bacterium alternates sequences of linear runs with periods of random reorientation (tumbling). The tumble-periods being typically much shorter than the run-periods, we can suppose them to be instantaneous. Moreover, the presence of a chemo-attractant in the environment of the bacterium influences the rate at which the bacterium changes its direction, and also the new direction it takes (see \cite{ODA} for more details on the model).\\
More precisely, we consider the PDMP $((X_t,V_t))_{t\geq 0}$ with values in $E=\RR^d\times \mathcal{B}(1) $, where $\mathcal{B}(1)=\left\{ v\in\RR^d, \normv \leq 1 \right\}$ is the Euclidian ball of radius $1$, with infinitesimal generator given by, for $h$ in the domain of $L$ (see \cite{Dav} for a definition of the domain of the infinitesimal generator):
\begin{equation}\label{generateur}
Lh(x,v)=v\cdot \nabla_x h(x,v)+\lambda(x,v)\int_{\mathcal{B}(1)} \left( h(x,v')-h(x,v)\right)Q(x,v,\mathrm{d}v'),
\end{equation}
where $Q(x,v,\cdot)$ is a probability kernel on $\mathcal{B}(1)$. We call this process "generalized Zig-Zag process" since it is a generalization of the Zig-Zag process studied in \cite{BR} and \cite{BFR}, in the sense that we do not any more have a velocity with values in $\{-1,1\}^d$, but in $\mathcal{B}(1)$.\\
In our model, $X_t$ represents the position of a bacterium at instant $t$, and $V_t$ its velocity. The form of the generator indicates that the first component $X$ is continuous and evolves according to $\frac{\dt X_t}{\dt t}=V_t$, whereas $V$ is constant during a random time, and jumps according to the kernel $Q$ at rate $\lambda(x,v)$ when $(X_t,V_t)=(x,v)$. The fact that the motion of the bacterium is biaised by the presence of a chemo-attractant will be taken into account in the assumptions we will make further on the jump rate $\lambda$ and the velocity kernel $Q$.\\
In this paper, we are interested in the long-time behaviour of the process driven by \eqref{generateur} under some good assumptions.

\bigskip
Many special cases of this process, driven by \eqref{generateur}, have already been studied in different ways and under different assumptions. Let first mention some works on the process in dimension $1$ with a modeling point of view: in \cite{FGM10}, Fontbona, Gu\'{e}rin and Malrieu have shown the exponential ergodicity of the process with a jump rate equal to $a\mathbf{1}_{xv\leq 0}+b\mathbf{1}_{xv>0}$ with $b> a >0$, and the velocity taking its values in $\{-1,+1\}$. For this, they give an exact description of the excursions of the process away from the origin and give an explicit construction of a coalescent coupling for both velocity and position. In \cite{FGM15} and \cite{BR}, the previous result has been extended by considering a more general jump rate, depending on the position and the velocity of the bacterium. Calvez, Raoul and Schmeiser have shown in \cite{CRS} by an analytical method the exponential ergodicity of the process driven by \eqref{generateur} in the particular case where the kernel $Q$ is the uniform kernel on $[-1,1]$, and under similar assumptions to the ones introduced here (see Section \ref{Section The particular case of dimension $1$}).\\
Furthermore, there exist also results for this process in high dimension. We can for instance cite \cite{BFR}, in which Bierkens, Fearnhead and Roberts study the Zig-Zag process, that is the process with values in $\RR^d\times \{-1,+1\}^d $. They prove its ergodicity in the case where it can be seen as a product of independent one-dimensional Zig-Zag processes. Then, Bierkens, Roberts and Zitt generalize these results in \cite{BRZ}, not considering only the case of a product of one-dimensional Zig-Zag process. In \cite{BCDV}, \cite{DBCD} and \cite{Mon}, the authors are interested in the ergodicity of the bouncy particle sampler, with values in $\RR^d\times\RR^d$ or $\RR^d\times\mathcal{S}^{d-1}$, where $\mathcal{S}^{d-1}$ is the unit sphere of $\RR^d$. This PDMP is a particular case of the process driven by \eqref{generateur}: for instance in \cite{Mon}, the jump rate is given by $\lambda(x,v)=\left( v\cdot \nabla_x U(x) \right)_+$, where $U$ is a potential, and at each jump, the velocity is reflected according to optical laws on the level set of $U$ it has reached. Recently, a new model have been studied: in \cite{RW}, Robert and Wu introduce the coordinate sample, which is a variant of the Zig-Zag process, since the velocity does not live in $\{-1,+1\}^d$ but in $\{e_i,1\leq i \leq d\}$ the canonical base of $\RR^d$. For this process, only one component of the position evolves between the jumps. The authors show in the paper the exponential ergodicity of this process under some conditions.\\
Finally, let us mention that the study of this kind of processes has an interest not only for biological modelling, but also for simulating a target distribution. In fact, almost all of the recent study in dimension $d$ do this for the sampling. Let us refer to \cite{BFR,BR,BRZ,BCDV,DBCD,DGM,Mon,RW}, where the authors want to sample from a distribution with a density proportional to $\expo^{ -U}$, where $U$ is a potential on $\RR^d$. For a jump rate $\lambda$ and a jump kernel $Q$ well chosen (with respect to $U$), the PDMP converges towards the targeted distribution. The estimation of the speed of convergence to equilibrium of these processes gives then information on the efficiency of the corresponding algorithms to sample from the target distribution. Comparisions of the efficiency of the different samplers are done in \cite{ADNR} and \cite{RW} for instance. An interest of considering PDMPs to catch a distribution is the irreversibility of PDMPs. Indeed, while many Markov chain Monte Carlo (MCMC) methods rely on realisations from a discrete reversible ergodic Markov chain, it has been observed that non-reversibility often implies favourable convergence properties (see for instance \cite{HHS,LNP}). 
Moreover, PDMPs have the advantage to be easy to sample, and can even in some cases being simulated without discretisation error.

 \subsection*{Framework}
 
 Let us now introduce the framework of the paper. Denoting by $x\cdot v$ the scalar product of $x\in \RR^d$ and $v\in \RR^d$, and $\lvert x \lvert$ the Euclidian norm of the vector $x$, the assumptions made on the model are the following:
 
\begin{itemize}

\item[$\Hun$ :] There exists $\lambda_{\min}>0$ such that for all $(x,v)\in E$, $\lambda(x,v)\geq \lambda_{\min}$;
\item[$\Hdeux$ :] The quantity $\lambda_{\max}=\sup_{\{(x,v)\in E ~:~ x\cdot v \leq 0\}}\lambda(x,v)$ is finite;
\item[$\Htrois$ :] There exists $p>0,\theta_0,\bar{\theta}\in (0,1]$ such that for all $(x,v)\in E$ satisfying $\frac{x\cdot v}{\normx}>-\bar{\theta}$, \[\int_{\{ v'\in \mathcal{V} ~:~ \frac{x\cdot v'}{\normx}\leq -\theta_0\}} Q(x,v,\dt v')\geq p;\]
\item[$\Hquatre$ :] There exists $\theta_*\in \left[0,(p\theta_0)^2\frac{\lambda_{\min}}{\lambda_{\max}}\right)$, $\beta>\frac{1}{(p\theta_0)^2}$ and $\Delta>0$ such that \[\inf_{\left\{ \xscalv \geq \theta_*, \normx \geq \Delta \right\}}\lambda(x,v) \geq \beta\lambda_{\max}.\]

\end{itemize}

Assumptions $\Hun$ and $\Hdeux$ mean that the jump rate is uniformly bounded by below, and that it is bounded from above when the bacterium is moving towards $0$, where the chemo-attractant is assumed to be. The existence of the lower bound $\lambda_{\min}$ is here to ensure the irreducibility of the process. We refer to Section 1.3 of \cite{BRZ} for illustrations of the difficulties if the switching rate $\lambda$ can be zero.\\
Assumptions $\Htrois$ and $\Hquatre$ reflect the attraction of the bacterium to the origin. Indeed, in Assumption $\Htrois$, we suppose that if the bacterium does not "enough" go towards the origin, when a jump happens, it has a chance to go towards it. Moreover, in Assumption $\Hquatre$, we assume a kind of monotonicity of the jump rate. Roughly speaking, we suppose that when the bacterium is far from the origin, and goes in a too bad direction, its jump rate is strictly bigger than $\lambda_{\max}$, which is the maximum of the jump rate when the bacterium is coming closer the origin.\\
Assumptions $\Hun,\Hdeux$ and $\Hquatre$ appear in the works \cite{BR,FGM10,FGM15}. However, in these papers, there is no assumption equivalent to $\Htrois$. But it is in fact normal: the Zig-Zag process has a velocity in $\{-1,1\}$, and thus, as soon as the bacterium goes in a bad direction, the jump makes it go towards the origin, and the assumption is in fact satisfied.\\
All of these assumptions seem to be reasonable for the modelling of the motion of bacteria as described above.

\begin{Rema}
We only consider the case of the speed in the ball unit ball of $\RR^d$, but the results can easily be adapted if the speed lies in a compact set $K$ of $\RR^d$.\\
Moreover, although we could hope to obtain similar results with an unbounded velocity (with probably other assumptions on the jump rate), we do not deal with this case since it is not really relevant from a modelling point of view.
\end{Rema}

Under these assumptions, we can show the exponential ergodicity (see Section \ref{Section Preliminaries} for the definition) of the generalized Zig-zag process $(X,V)$. 

\begin{Theo}\label{Theoprincipal}
Let $(X_t,V_t)_{t\geq 0}$ be a PDMP on $E=\RR^d\times \mathcal{B}(1)$ with infinitesimal generator given by \eqref{generateur}.\\
If $\lambda$ and $Q$ satisfy Assumptions $\Hun$, $\Hdeux$, $\Htrois$ and $\Hquatre$, then the process is exponentially ergodic.
\end{Theo}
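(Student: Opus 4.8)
The plan is to establish exponential ergodicity via the classical Meyn–Tweedie strategy: find a Lyapunov (Foster) function $W\colon E\to[1,\infty)$ satisfying a geometric drift condition $LW\leq -cW+b\mathbf{1}_{C}$ for some petite set $C$, and then verify a minorisation (Doeblin-type) condition on $C$ together with irreducibility and aperiodicity; exponential ergodicity then follows from the general theory for continuous-time Markov processes (Down–Meyn–Tweedie). So the argument splits into two largely independent parts: construction of the Lyapunov function, and proof of the local minorisation.

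For the drift condition, the natural candidate is a function that grows with $|x|$ but is corrected by the velocity, e.g. something of the form $W(x,v)=\exp(a|x|)\,\varphi(x\cdot v/|x|)$ or $W(x,v)=|x|+$ (a bounded term depending on the angle $x\cdot v/|x|$), with the weight tuned so that the two competing effects balance. Applying $L$ to such a $W$, the transport term $v\cdot\nabla_x W$ contributes a term governed by $x\cdot v/|x|\in[-1,1]$, while the jump term contributes $\lambda(x,v)\int(W(x,v')-W(x,v))\,Q(x,v,\mathrm{d}v')$. The key mechanism is that when the bacterium moves outward ($x\cdot v/|x|\geq\theta_*$, $|x|\geq\Delta$), Assumption $\Hquatre$ forces a large jump rate $\geq\beta\lambda_{\max}$, and by $\Htrois$ a jump has probability $\geq p$ of sending the velocity into the inward cone $\{x\cdot v'/|x|\leq-\theta_0\}$; the quantitative conditions $\theta_*<(p\theta_0)^2\lambda_{\min}/\lambda_{\max}$ and $\beta>(p\theta_0)^{-2}$ are exactly what is needed to make the averaged drift of $|x|$ strictly negative outside a ball. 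When the bacterium moves inward the transport term already decreases $|x|$. I expect the bookkeeping here — choosing $W$, the constant $a$, and the angular correction so that $LW\leq -cW$ outside a compact set $C=\{|x|\leq R\}\times\mathcal{B}(1)$ — to be the delicate computational core of the proof, and this is the step I anticipate as the main obstacle.

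For the minorisation on the compact set $C$, the idea is to show that from any $(x,v)\in C$ the process can, with probability bounded below uniformly over $C$, be driven in a fixed time $T$ into a small region around a reference point, and that the law at time $T$ dominates a fixed nontrivial measure there. Concretely one uses $\Hun$ to guarantee at least one jump occurs in $[0,T]$ with probability $\geq 1-e^{-\lambda_{\min}T}$, uses $\Htrois$ to control the post-jump velocity, and uses $\Hdeux$ (boundedness of $\lambda$ where $x\cdot v\leq 0$) to prevent too many jumps during an inward excursion, so that the flow smears the position into a set with nonempty interior; a standard argument then yields the existence of a density component and hence the required small-set (in fact petite-set) property. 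Combined with the irreducibility furnished by these controllability statements, this gives aperiodic positive Harris recurrence with a geometric rate.

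Finally, assembling the pieces: the drift condition plus the petite set $C$ yield, by Down–Meyn–Tweedie, a unique invariant probability measure $\mu$ and a constant $\rho<1$ with $\|\mathbb{P}_t((x,v),\cdot)-\mu\|_{W}\leq M(x,v)\rho^t$, i.e.\ exponential ergodicity in the $W$-weighted total variation norm. The one-dimensional uniform-kernel case of \cite{CRS} should serve as a useful sanity check that the constants in $\Hquatre$ are the right ones.
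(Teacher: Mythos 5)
Your strategy coincides with the paper's own: Meyn--Tweedie via Theorem \ref{Theo Lyapunov function}, with a Lyapunov function of the form $e^{\alpha|x|}$ multiplied by a bounded weight in the angle $\frac{x\cdot v}{|x|}$, plus petiteness, irreducibility and aperiodicity of compact sets obtained from controllability arguments based on $\Hun$, $\Hdeux$, $\Htrois$ (the paper delegates exactly this to Lemmas 2--3 of \cite{DBCD}). You also identify the correct mechanism: $\Hquatre$ forces a large jump rate on outward trajectories far from the origin, while $\Htrois$ gives mass at least $p$ to the inward cone after a jump.

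The genuine gap is that the step you explicitly defer (``the delicate computational core'') is the entire content of the paper's proof, so what you have is a roadmap rather than a proof. What must be done is: take $H(x,v)=e^{\alpha|x|}\bigl(a+\varphi\bigl(\frac{x\cdot v}{|x|}\bigr)\bigr)$ with $\varphi$ non-decreasing, $C^1$, equal to $\theta$ on $[-1,-\theta_1]$ and to $0$ on $[0,1]$, and verify $LH\leq-\eta H$ for $|x|\geq R$ by a case analysis over the four angular regimes $[-1,-\theta_1]$, $(-\theta_1,0)$, $[0,\theta_*]$, $(\theta_*,1]$. In the two intermediate regimes the transport term contributes essentially nothing (or is even slightly unfavourable), and the negativity comes entirely from the jump term through the bound $A_3\leq-\lambda(x,v)\bigl(p\theta_0+\varphi\bigl(\frac{x\cdot v}{|x|}\bigr)\bigr)$ furnished by $\Htrois$; this is precisely why the angular correction $\varphi$ is needed, and your sketch (``when the bacterium moves inward the transport term already decreases $|x|$'') does not address it. Moreover, the conditions $\theta_*<(p\theta_0)^2\lambda_{\min}/\lambda_{\max}$ and $\beta>(p\theta_0)^{-2}$ do not enter as an ``averaged drift of $|x|$'' statement: they are exactly what guarantees that the intervals from which $\theta_1$, $\alpha$ and $a$ must be chosen are non-empty, and without exhibiting such a choice the drift inequality remains unproven. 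Finally, your alternative candidate $W(x,v)=|x|+{}$(bounded angular term) cannot yield exponential ergodicity: since the speed is bounded by $1$, such a $W$ can at best satisfy $LW\leq-c+b\mathbf{1}_C$, not $LW\leq-cW+b\mathbf{1}_C$, so the exponential factor $e^{\alpha|x|}$ is essential and only your first candidate (suitably normalised so that the angular weight stays bounded away from $0$, as in $a+\varphi$) can be made to work.
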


\begin{figure}\label{histogrammes}
\includegraphics[scale=0.25]{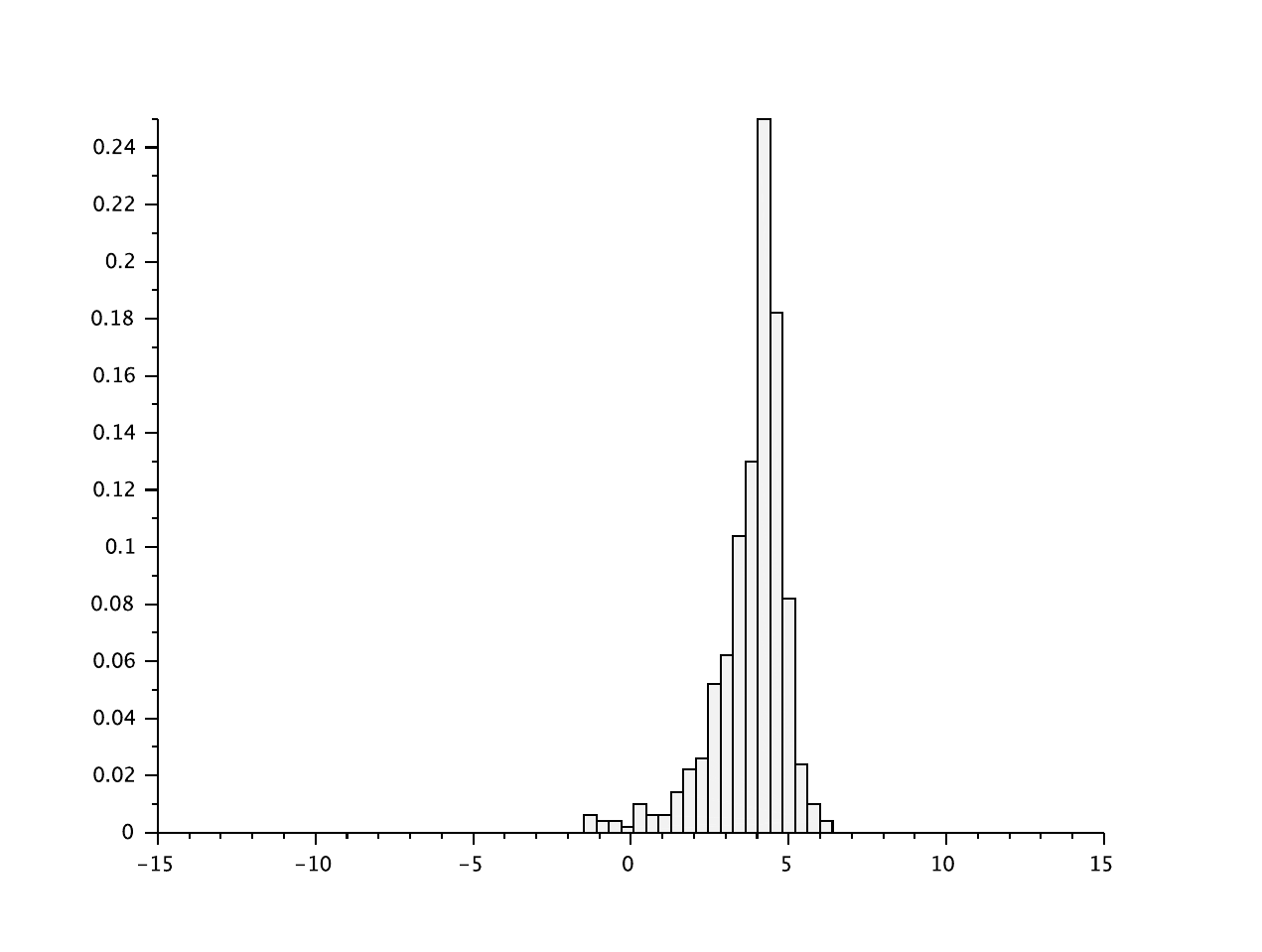} \includegraphics[scale=0.25]{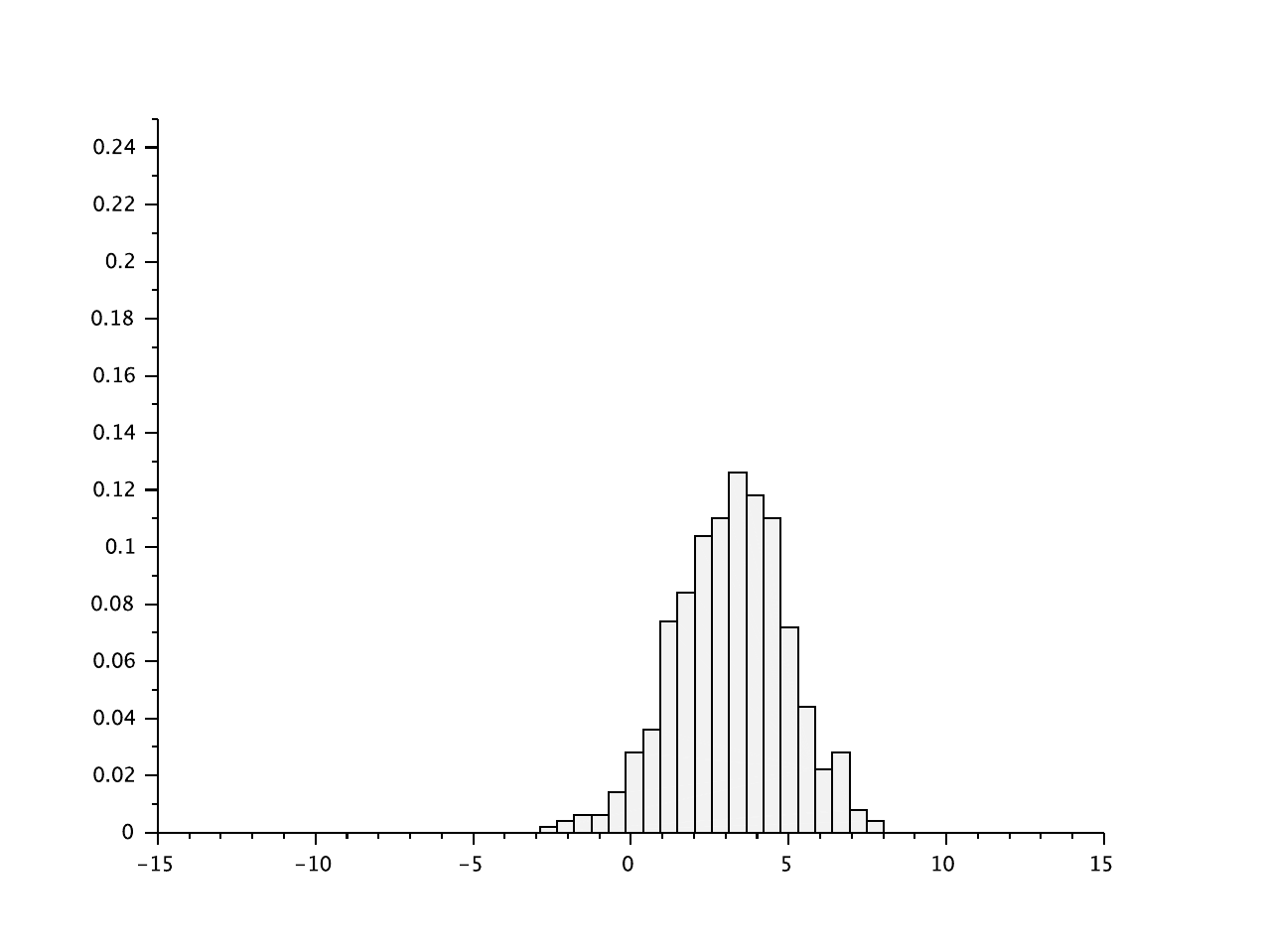} \includegraphics[scale=0.25]{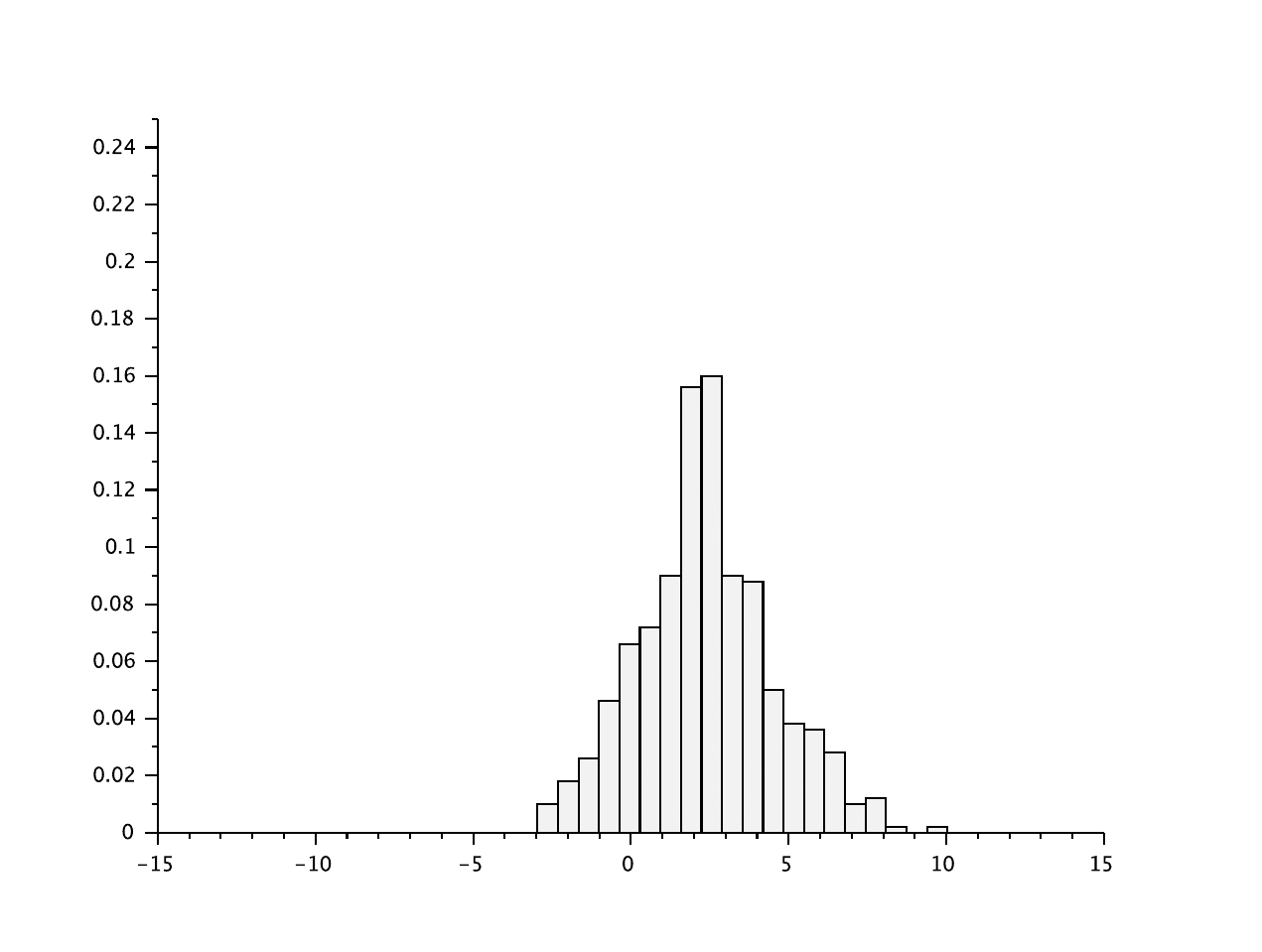} \includegraphics[scale=0.25]{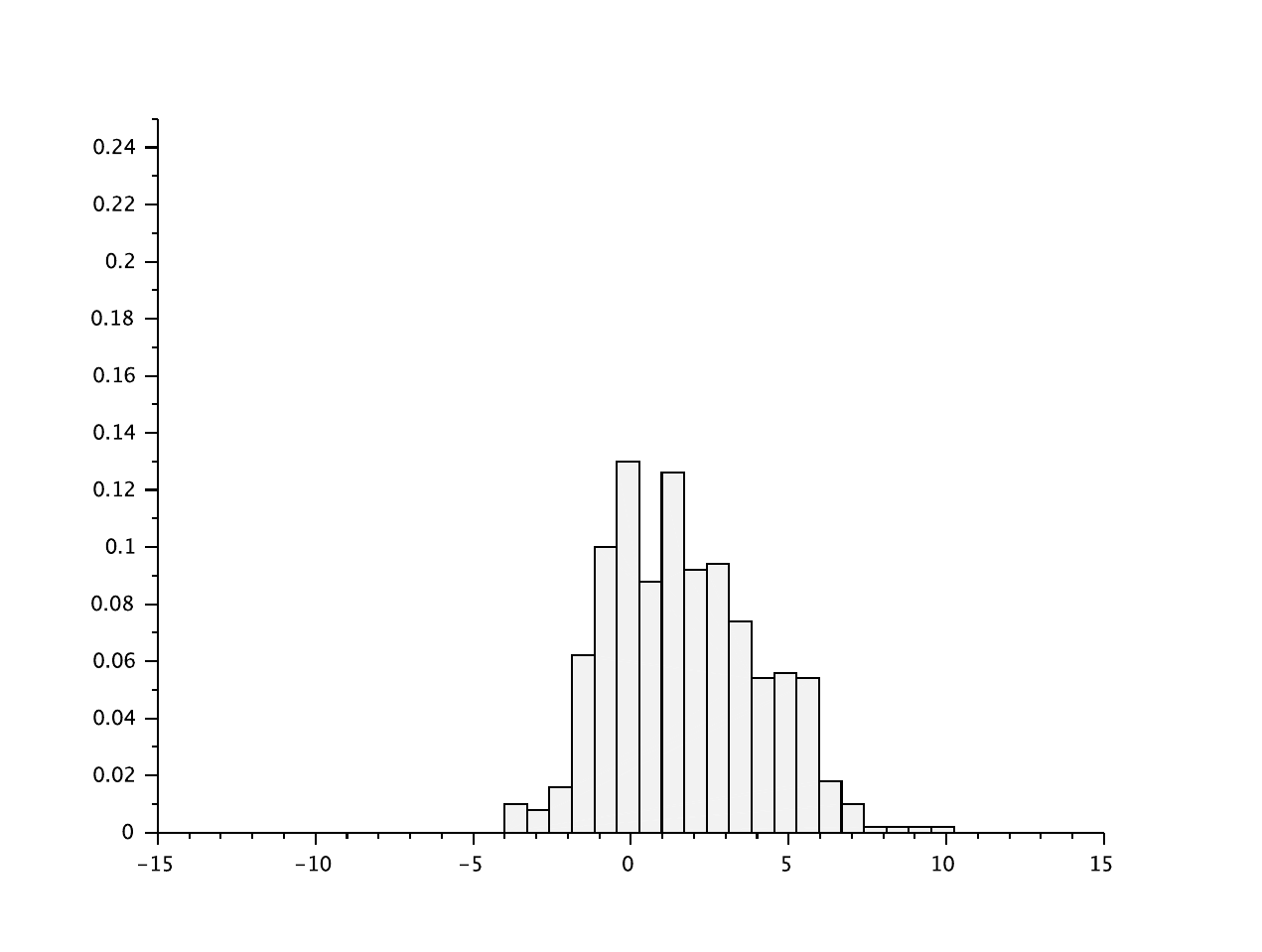} \includegraphics[scale=0.25]{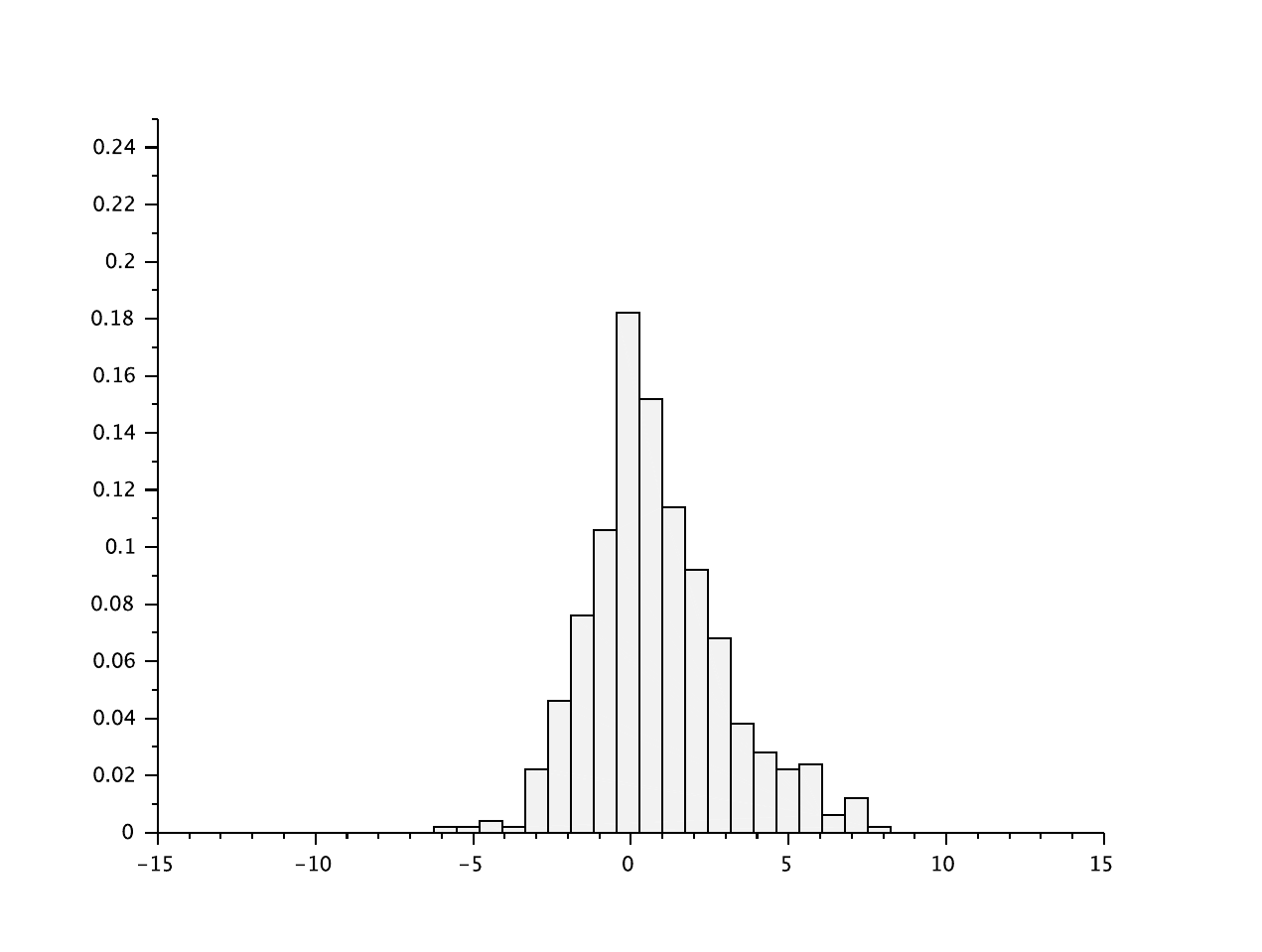} \includegraphics[scale=0.25]{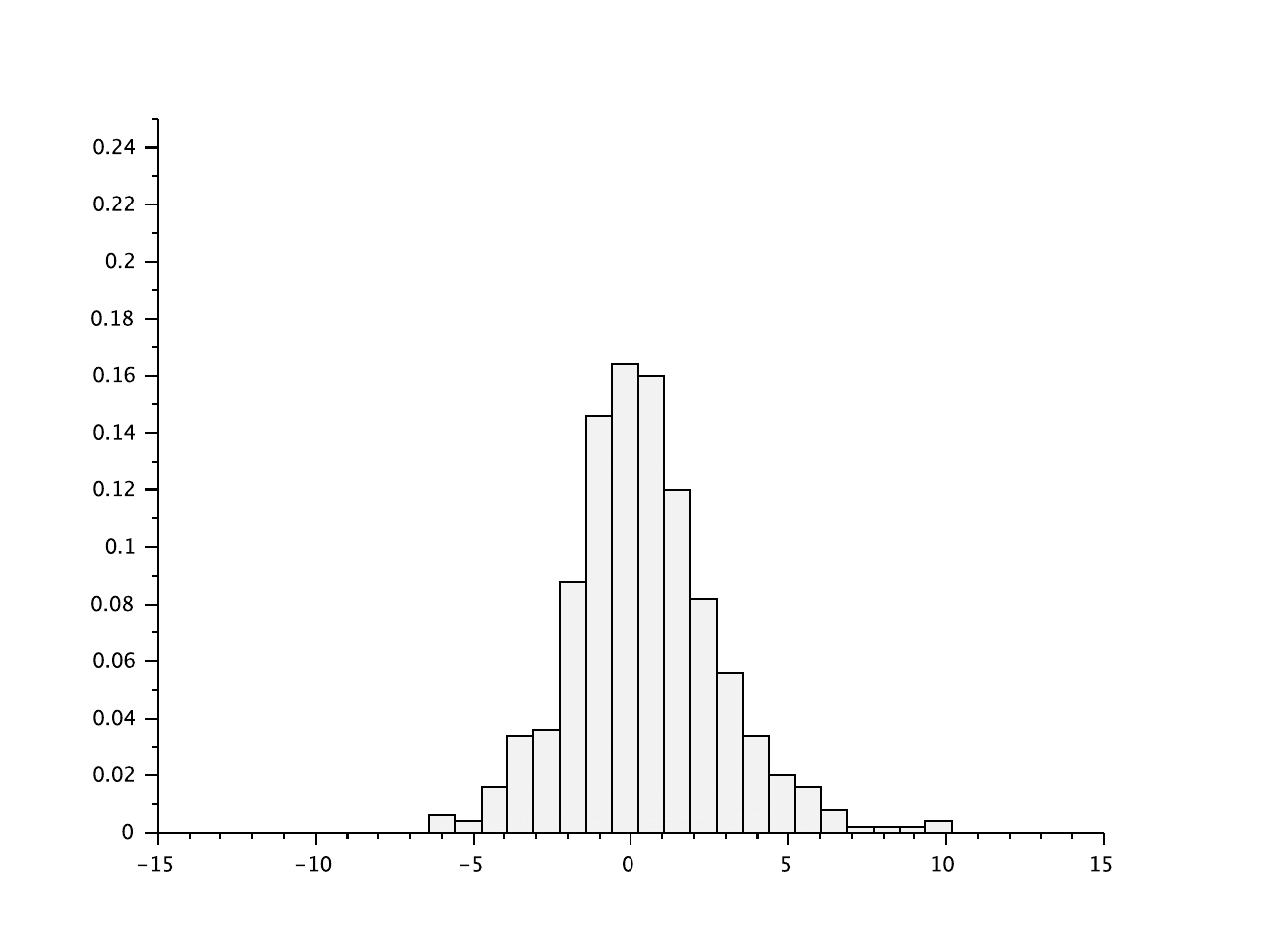}
\caption{Empirical distribution of $X_t$ starting at $(5,-1)$ for $t\in\{0.5,5,10,17,30,40\}$ with $\lambda(x,v)=\mathbf{1}_{xv<0}+2\mathbf{1}_{xv\geq 0}$ and $Q(x,v,\dt v')=\frac{1}{2}\mathbf{1}_{[-1,1]}(v')\dt v'$.}
\end{figure}

In Figure \ref{histogrammes}, we can observe the convergence of the empirical law of $(X_t)_{t\geq 0}$ in the case $\lambda(x,v)=\mathbf{1}_{xv<0}+2\mathbf{1}_{xv\geq 0}$ and $Q(x,v,\dt v')=\frac{1}{2}\mathbf{1}_{[-1,1]}(v')\dt v'$.\\

\bigskip
The interest of this convergence result in relation to those already existing is that it concerns a very general class of PDMPs.\\
The approach we will carry out in Section \ref{Section Main result} to prove Theorem \ref{Theoprincipal} is the classical method of Meyn and Tweedie (\cite{DMT,MT93a,MT93b}), by showing the existence of a Lyapunov function for the process, and the existence of petite sets. In Section \ref{Section Preliminaries}, we thus first briefly recall the generalities about ergodicity of Markov processes.\\
Then, we will study in Section \ref{Section Exponential moments for the invariant measure} the existence of exponential moments for the invariant measure in Theorem \ref{Theo exponential moments of the invariant measure}. For this purpose, we will use a convergence result on semi-regenerative processes, since our PDMP is such a process.\\
Finally, we will go back to the particular case of the dimension $1$ in Section \ref{Section The particular case of dimension $1$}, in which we study our process with a different approach than in Section \ref{Section Main result}. In particular, in this section, we will assume that the jump kernel does not depend on the position of the bacterium. Thus, only the jump rate will favour the return towards the origin, and therefore the ergodicity of the process. We will establish in Theorems \ref{Theoprincipal dim1} and \ref{Theorem exponential moments of the invariant measure dim 1} the exponential ergodicity of the process and the existence of exponential moments for its invariant probability measure. The results obtained in this section can be linked with the previous works \cite{CRS}, \cite{FGM10} and \cite{FGM15}, since they are in fact a generalization of these latter.\\
We mention at the end of the paper that the two studies applied in dimension $1$ are complementary, since one can be applied at some process whereas the other can not, and conversely.

\section{Preliminaries}\label{Section Preliminaries}

\subsection{About ergodicity}

In this paper, we study the convergence of our process with the total variation distance. Let us recall its definition. Let $\nu$ and $\overset{\sim}{\nu}$ be two probability measures on a measurable space $E$. We say that a probability measure on $E\times E$ is a coupling of $\nu$ and $\overset{\sim}{\nu}$ if its marginals are $\nu$ and $\overset{\sim}{\nu}$. Denoting by $\Gamma(\nu,\overset{\sim}{\nu})$ the set of all the couplings of $\nu$ and $\overset{\sim}{\nu}$, we say that two random variables $X$ and $\overset{\sim}{X}$ satisfy $(X,\overset{\sim}{X})\in\Gamma(\nu,\overset{\sim}{\nu})$ if $\nu$ and $\overset{\sim}{\nu}$ are the respective laws of $X$ and $\overset{\sim}{X}$. The total variation distance between these two probability measures is then defined by 
\begin{equation}
\lVert \nu - \overset{\sim}{\nu} \lVert_{TV} = \underset{(X,\overset{\sim}{X})\in\Gamma(\nu,\overset{\sim}{\nu})}{\inf}  \PP(X\neq \overset{\sim}{X}).
\end{equation}
For other definitions of the total variation distance and its properties, see for instance \cite{Lind}.
\bigskip

Let us now introduce some usefull results to study the convergence of a Markov process towards its invariant measure (see \cite{DMT,MT93a,MT93b}).\\
Let $(Y_t)_{t\geq 0}$ be a Markov process on the state space $E$, and denote by $\tP$ its semi-group and $\tL$ its infinitesimal generator. We say that the Markov process $Y$ is exponentially ergodic if there exists a probability measure $\pi$, a function $M:E\longrightarrow \RR_+$ and a constant $0<\rho<1$ such that
\begin{equation}
\lVert \tP_t(x,\cdot)-\pi \lVert_{TV} \leq M(x) \rho^t, \text{ for all } t\geq 0,
\end{equation}
where $\tP_t(x,\cdot)=\PP_x(Y_t\in\cdot)$.\\
A set $K$ is said to be petite for the process $Y$ if there exists a probability measure $\nu$ on $\RR_+$ and a nontrivial measure $\mu$ on $E$ such that, for all $x\in K$,
\begin{equation}\label{petite}
\int_0^{\infty} \tP_t(x,\cdot)\nu(\dt t) \geq \mu(\cdot).
\end{equation}
This notion is weaker than the notion of small sets: $K$ is said to be a small set for the process $Y$ if there exists $t_0>0$ and a non trivial measure $\mu$ on $E$ such that, for all $x\in K$,
\begin{equation}
\tP_{t_0}(x,\cdot) \geq \mu(\cdot).
\end{equation}
Let $K\subset E$ be a compact set, and let $H:E\longrightarrow \RR$ be a function. We say that $H$ is a Lyapunov function (associated to the set $K$) for the process $Y$ if $H(x)\geq 1$ for all $x\in E$ and if there exists some constants $\alpha>0$ and $\beta\geq 0$ such that for all $x\in E$,
\begin{equation}\label{lyapunov}
\tL H(x) \leq -\alpha H(x) + \beta \mathbf{1}_K(x).
\end{equation}
Finally, we recall that the Markov process $Y$ is called $\varphi$-irreducible if there exists a $\sigma$-finite measure $\varphi$ such that for all measurable set $A$ with $\varphi(A)>0$ we have, for all $x\in E$, $\EE_x\left[ \int_0^{\infty} \mathbf{1}_{Y_t\in A}\dt t\right] >0$. In that case, there exists a maximal irreducibility measure $\psi$ such that for any other irreducibility measure $\nu$, $\nu$ is absolutely continuous with respect to $\psi$. We then write $\mathcal{B}^+(E)=\{A\subset E \text{ measurable}, \psi(A)>0\}$.
The process $Y$ is said to be aperiodic if for some small set $C\in \mathcal{B}^+(E)$, there exists $T>0$ such that for all $t\geq T$ and all $x\in C$ we have $\tP_t(x,C)>0$.
\bigskip

We can now recall the main result we will use in Section \ref{Subsection Proof of the main result} to prove Theorem \ref{Theoprincipal}.

\begin{Theo}{(\cite{DMT})}\label{Theo Lyapunov function}
Let $Y$ be an irreducible and aperiodic Markov process. If there exists a petite set $K$ for the process $Y$ and a Lyapunov function associated to this set $K$ (i.e. \eqref{petite} and \eqref{lyapunov} hold) , then the process $Y$ is exponentially ergodic.
\end{Theo}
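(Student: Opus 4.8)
The plan is to run the classical Meyn--Tweedie scheme: turn the continuous-time Lyapunov inequality \eqref{lyapunov} into an exponential control of the hitting and return times of $K$; use $\psi$-irreducibility together with aperiodicity to replace the petite set $K$ by a genuine small set for a suitably sampled (skeleton) chain; apply the discrete-time geometric ergodicity theorem to that chain via a regeneration (Nummelin splitting) argument; and finally transfer the resulting geometric decay back to the semigroup $(\tP_t)_{t\geq 0}$, again using $H$.

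First I would exploit the drift condition. Writing $\sigma_K=\inf\{t\geq 0:Y_t\in K\}$ and using that $\tL H\leq -\alpha H$ on $E\setminus K$, one checks (Dynkin's formula and a localisation) that $t\mapsto e^{\alpha(t\wedge\sigma_K)}H(Y_{t\wedge\sigma_K})$ is a nonnegative $\PP_x$-supermartingale for $x\notin K$; since $H\geq 1$ this yields $\EE_x\big[e^{\alpha\sigma_K}\big]\leq H(x)$ for $x\notin K$, and, propagating over one short time step via the drift, a finite bound for $x\in K$ as well — in other words an exponential moment bound on the return time to $K$. Integrating \eqref{lyapunov} directly also gives the uniform-in-time estimate $\tP_tH(x)\leq e^{-\alpha t}H(x)+\tfrac{\beta}{\alpha}$, which will be needed at the very end.

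Next comes the passage to a discrete-time chain, which is the delicate point. Since $Y$ is $\psi$-irreducible and aperiodic, for every $t_0>0$ the skeleton chain $(Y_{nt_0})_{n\geq 0}$ is again $\psi$-irreducible (this is precisely where aperiodicity is used), and a set that is petite for $(\tP_t)_t$ is petite — hence, after enlarging it inside $\mathcal{B}^+(E)$, small — for that chain: there exist $\varepsilon>0$ and a probability measure $\mu_0$ with $\tP_{t_0}(x,\cdot)\geq\varepsilon\,\mu_0$ for all $x\in K$. The exponential return-time bound of the previous step transfers to the skeleton (a unit of continuous time costs at most $\lceil 1/t_0\rceil$ skeleton steps, so $\EE_x[\,r^{-\tau}\,]<\infty$ for the skeleton return time $\tau$ to $K$ and some $r\in(0,1)$). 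The discrete-time geometric ergodicity theorem of Meyn--Tweedie (\cite{MT93a,MT93b}) — split $K$ to create an accessible atom, then a renewal/coupling estimate whose speed is governed by the geometric tail of the regeneration times — then produces an invariant probability $\pi$ for the skeleton, a rate $\rho_0\in(0,1)$ and a finite function $M_0\leq cH$ with $\lVert\tP_{nt_0}(x,\cdot)-\pi\rVert_{TV}\leq M_0(x)\rho_0^{\,n}$; by uniqueness of the invariant law for the irreducible skeleton, $\pi$ is invariant for the whole semigroup.

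Finally, for $t=nt_0+s$ with $0\leq s<t_0$, the Markov and semigroup properties give
\begin{equation*}
\lVert\tP_t(x,\cdot)-\pi\rVert_{TV}\leq\int_E\lVert\tP_{nt_0}(y,\cdot)-\pi\rVert_{TV}\,\tP_s(x,\dt y)\leq\rho_0^{\,n}\int_E M_0(y)\,\tP_s(x,\dt y)\leq c\,\rho_0^{\,n}\,\tP_sH(x),
\end{equation*}
and since $\tP_sH(x)\leq H(x)+\tfrac{\beta}{\alpha}$ uniformly for $s\in[0,t_0]$ while $\rho_0^{\,n}\leq\rho_0^{-1}\big(\rho_0^{1/t_0}\big)^{t}$, this is exactly a bound $\lVert\tP_t(x,\cdot)-\pi\rVert_{TV}\leq M(x)\rho^{\,t}$ with $\rho=\rho_0^{1/t_0}\in(0,1)$ and $M(x)=c'\big(H(x)+\tfrac{\beta}{\alpha}\big)$, i.e. exponential ergodicity. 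The step I expect to require the most care is the middle one: verifying rigorously that a merely petite set supplies a minorisation for the skeleton chain and that the irreducibility and aperiodicity of $Y$ pass to it, and then carrying the Lyapunov function $H$ through the splitting construction so that the prefactor $M$ in the final estimate stays controlled by $H$; the drift computation and the final reassembly are routine once these ingredients are in hand.
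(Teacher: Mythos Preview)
Your outline is a faithful sketch of the standard Meyn--Tweedie argument, but there is nothing to compare it with here: the paper does not prove this theorem at all. It is stated in Section~\ref{Section Preliminaries} as a preliminary result quoted from \cite{MT93a} (note the citation attached to the theorem heading), and is followed only by a remark that the result is qualitative. The paper's own contribution begins in Section~\ref{Section Main result}, where this theorem is \emph{applied} by constructing a Lyapunov function and checking the petite-set, irreducibility and aperiodicity conditions.

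So your proposal is not so much an alternative proof as a reconstruction of what the cited reference does. As such it is broadly correct in spirit; the one point where you are a bit optimistic is the transfer of petiteness and aperiodicity to a fixed skeleton $(Y_{nt_0})_{n\geq 0}$. In the general Meyn--Tweedie framework one usually works instead with the resolvent (sampled) chain with kernel $R=\int_0^\infty \tP_t\,\nu(\dt t)$, for which petiteness in the sense of \eqref{petite} immediately gives a minorisation, and the continuous-time aperiodicity hypothesis is formulated precisely so that this chain is aperiodic. Passing through a deterministic-time skeleton requires an extra argument (or extra hypotheses) to ensure $\psi$-irreducibility and aperiodicity survive; this is exactly the ``delicate point'' you flag, and in \cite{MT93a,DMT} it is handled via the resolvent chain rather than a skeleton.
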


\subsection{Description of the process}

Let us now describe the dynamics of the process.\\
The variables $0=T_0,T_1,T_2,\ldots$ refer to the successive jumping times of the process, and for $n\geq 1$ we denote by $\tau_n$ the inter-jump time: $\tau_n=T_n-T_{n-1}.$\\
In order to make the paper easier to read, we note $V_i$ the velocity on the time interval $[T_i,T_{i+1})$, instead of $V_{T_i}$.\\
Finally, we denote by $\overline{F}((\cdot,x,v)$ the survival function of $T_1$ with initial data $(X_0,V_0)=(x,v)$: $\overline{F}(\cdot,x,v)=\PP_{x,v}(T_1>\cdot)$.\\
The time $T_1$ satisfies, when $(X_0,V_0)=(x,v)$:
\[ T_1=\inf\left\{ t\geq 0, \int_0^t \lambda(X_s,V_s) \dt s \geq E\right\} = \inf\left\{ t\geq 0, \int_0^t \lambda(x+v s,v) \dt s \geq E\right\} \]
where $E$ is an exponential variable with unit mean, because the process is deterministic between jump times. We then get:
\begin{align*}
\overline{F}(t,x,v) &= \PP_{x,v}(T_1>t)\\
&= \PP_{x,v}\left( \int_0^t \lambda(x+v s,v) \dt s \leq E\right)\\
&= \exp\left( - \int_0^t \lambda(x+v s,v)\dt s\right).
\end{align*}
The conditional distribution of the inter-jump times is then given by, for all $n\geq 0$:
\[ \PP\left( \tau_{n+1}\geq t \left| X_{T_n},V_n \right. \right)= \exp\left( - \int_0^t \lambda(X_{T_n}+V_n s,V_n)\dt s\right). \]

\section{Main result}\label{Section Main result}

In this section we prove our main result, Theorem \ref{Theoprincipal}, using Theorem \ref{Theo Lyapunov function}. We first need to find a Lyapunov function for the process.

\subsection{A Lyapunov function}\label{Subsection A Lyapunov function}

Let us introduce some constants which will appear in the definition of our Lyapunov function. Thanks to Assumptions $\Hun$, $\Hdeux$, $\Htrois$ and $\Hquatre$, the following quantities are well defined:

\begin{align*}
\theta_1&\in \left( \max\left\{ \frac{\theta_*\lambda_{\max}}{p\theta_0\lambda_{\min}}; \frac{1}{p\theta_0\beta}\right\}~,~ \min\left\{p\theta_0,\bar{\theta}\right\} \right);\\
\alpha &\in \left( 0~,~ \min\left\{ \frac{p\theta_0\lambda_{\min}}{\theta_*}-\frac{\lambda_{\max}}{\theta_1} ; p\theta_0\beta\lambda_{\max}-\frac{\lambda_{\max}}{\theta_1} \right\}\right);\\
a &\in \left( 1+\frac{\lambda_{\max}}{\alpha\theta_1}~,~\min\left\{ \frac{p\theta_0\lambda_{\min}}{\alpha\theta_*}; \frac{p\theta_0\beta\lambda_{\max}}{\alpha}\right\}\right).
\end{align*}

We then consider the function $H$ defined on $E=\RR^d\times \mathcal{B}(1)$ by
\[ H(x,v)= \expo^{\alpha\normx} \left( a+ \varphi\left(\frac{x\cdot v}{\normx}\right) \right), \]
where $\varphi$ is a non-decreasing function of class $C^1$ on $[-1,1]$, with $\varphi(\theta)=\theta$ if $\theta\in [-1,-\theta_1]$, and $\varphi(\theta)=0$ if $\theta\in [0,1]$. We introduce $m$ the supremum norm of the derivative of $\varphi$, that is:
\[m=\sup_{\theta\in [-1,1]}| \varphi^{'}(\theta) |.\]
Then, we have the following result:

\begin{Prop}\label{Prop Lyapunov function}
There exist some constants $R>0$, $\eta>0$ such that for all $(x,v)\in (\RR^d\backslash\mathcal{B}(R))\times \mathcal{B}(1)$,
\[ LH(x,v)\leq -\eta H(x,v). \]
The function $H$ is then a Lyapunov function associated to the set $\mathcal{B}(R)\times \mathcal{B}(1)$ for the process $(X,V)$ driven by the generator $L$.
\end{Prop}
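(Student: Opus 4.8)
The plan is to compute $LH$ explicitly, factor out $e^{\alpha|x|}$, and then run a case analysis on the ``angle'' $\theta:=\frac{x\cdot v}{|x|}\in[-1,1]$ (well defined since $|x|\ge R>0$), with the four ranges of $\theta$ matched to the four assumptions. Writing $H(x,v)=e^{\alpha|x|}(a+\varphi(\theta))$ and using $\nabla_x|x|=x/|x|$ and $\nabla_x\theta=\frac v{|x|}-\frac{\theta x}{|x|^2}$, one gets $v\cdot\nabla_x|x|=\theta$ and $v\cdot\nabla_x\theta=\frac{|v|^2-\theta^2}{|x|}$, hence
\[ v\cdot\nabla_x H=\alpha\theta\,H+e^{\alpha|x|}\varphi'(\theta)\,\frac{|v|^2-\theta^2}{|x|}; \]
by Cauchy--Schwarz $|\theta|\le|v|\le1$, so $0\le|v|^2-\theta^2\le1$ and the last term is bounded by $\frac m{|x|}e^{\alpha|x|}$ (and vanishes when $\theta\ge0$, where $\varphi'\equiv0$). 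In the jump term the factors $e^{\alpha|x|}$ and $a$ are independent of $v'$, so it equals $\lambda(x,v)e^{\alpha|x|}\bigl(\int\varphi(\theta')\,Q(x,v,\dt v')-\varphi(\theta)\bigr)$ with $\theta'=\frac{x\cdot v'}{|x|}$; since $\varphi\le0$ on $[-1,1]$ and $\varphi(\theta')=\theta'\le-\theta_0$ on $\{\theta'\le-\theta_0\}$ (note $\theta_1<p\theta_0\le\theta_0$, so $\varphi$ is the identity there), Assumption $\Htrois$ yields the key bound $\int\varphi(\theta')\,Q(x,v,\dt v')\le-p\theta_0$. Dividing $LH$ by $e^{\alpha|x|}$, it thus suffices to show, for $|x|$ large,
\[ \alpha\theta(a+\varphi(\theta))+\lambda(x,v)\Bigl(\int\varphi(\theta')\,Q(x,v,\dt v')-\varphi(\theta)\Bigr)+\frac m{|x|}\le-\eta(a+\varphi(\theta)). \]

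For this I would distinguish four ranges of $\theta$, using throughout that $a+\varphi(\theta)\ge a-1>0$. If $\theta\in[-1,-\theta_1)$ then $\varphi(\theta)=\theta$ and the transport term $\alpha\theta(a+\theta)\le-\alpha\theta_1(a+\theta)$ is strongly negative; since $x\cdot v<0$, Assumption $\Hdeux$ gives $\lambda\le\lambda_{\max}$, and with $\int\varphi(\theta')Q\le0$ the jump term is $\le-\lambda_{\max}\theta\le\lambda_{\max}$, so the inequality follows from the constraint $a>1+\frac{\lambda_{\max}}{\alpha\theta_1}$. If $\theta\in[-\theta_1,0)$ then $\varphi(\theta)\in[-\theta_1,0]$, the transport term is $\le0$, and by $\Hun$ the jump term is $\le\lambda_{\min}(-p\theta_0-\varphi(\theta))\le-\lambda_{\min}(p\theta_0-\theta_1)<0$, which suffices because $\theta_1<p\theta_0$. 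If $\theta\in[0,\theta_*)$ then $\varphi(\theta)=0$, the transport term is $\le\alpha\theta_* a$, and by $\Hun$ the jump term is $\le-\lambda_{\min}p\theta_0$, so the inequality follows from $a<\frac{p\theta_0\lambda_{\min}}{\alpha\theta_*}$ (this range is empty if $\theta_*=0$). Finally, if $\theta\in[\theta_*,1]$ then $\varphi(\theta)=0$ and the transport term is $\le\alpha a$; provided $|x|\ge\Delta$, Assumption $\Hquatre$ gives $\lambda\ge\beta\lambda_{\max}$, so the jump term is $\le-\beta\lambda_{\max}p\theta_0$ and the inequality follows from $a<\frac{p\theta_0\beta\lambda_{\max}}{\alpha}$. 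In each range the inequality holds for every sufficiently small $\eta>0$ and every $|x|\ge R$ with $R$ large enough to absorb the $\frac m{|x|}$ term and to guarantee $R\ge\Delta$; taking $R$ to be this threshold and $\eta$ the minimum of the four values gives $LH\le-\eta H$ on $(\RR^d\setminus\mathcal{B}(R))\times\mathcal{B}(1)$.

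To finish, $H\ge a-1>0$ (indeed $H\ge1$ after choosing $\alpha$ small enough that $1+\frac{\lambda_{\max}}{\alpha\theta_1}\ge2$, or simply after rescaling $H$), $H$ is bounded on the compact set $K:=\mathcal{B}(R)\times\mathcal{B}(1)$, and $LH$ is bounded above on $K$: for $\theta\le0$ one has $\lambda\le\lambda_{\max}$ by $\Hdeux$, while for $\theta>0$ one has $\varphi(\theta)=0$, so the jump term $\lambda e^{\alpha|x|}\int\varphi(\theta')Q$ is $\le0$ no matter how large $\lambda$ is. Hence $LH\le-\eta H+(C_K+\eta\sup_K H)\mathbf 1_K$ for a suitable constant $C_K$, which is precisely the Lyapunov inequality \eqref{lyapunov}.

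I expect the main difficulty to be the bookkeeping rather than any single estimate: one has to check that the nested systems of strict inequalities defining $\theta_1$, then $\alpha$, then $a$ are simultaneously solvable — which reduces exactly to the two conditions $\theta_*<(p\theta_0)^2\frac{\lambda_{\min}}{\lambda_{\max}}$ and $\beta>\frac1{(p\theta_0)^2}$ of $\Hquatre$ — and that these choices leave a strictly positive margin in all four ranges at once, so that a single $\eta>0$ and a single $R$ work everywhere.
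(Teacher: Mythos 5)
Your proposal is correct and follows essentially the same route as the paper: the same decomposition of $LH$ into the transport terms $\alpha\theta(a+\varphi(\theta))$ and $\varphi'(\theta)\frac{|v|^2-\theta^2}{|x|}$ plus the jump term, the same use of $\Htrois$ to bound the jump integral by $-p\theta_0$, and the same four-range case analysis in $\theta$ matched to $\Hun$--$\Hquatre$ with the identical constraints on $\theta_1$, $\alpha$, $a$ and the choice of $R$ and small $\eta$. Your closing remarks (verifying $H\ge1$ up to rescaling and that $LH$ is bounded above on $\mathcal{B}(R)\times\mathcal{B}(1)$ despite $\lambda$ being possibly unbounded for $x\cdot v>0$) are correct extras that the paper leaves implicit.
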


\begin{Rema}
This function $H$ can be compared to the one of \cite{BR,FGM15} in dimension $1$. It is also qualitatively equivalent to the ones of \cite{BRZ,DGM,DBCD}, but here it is written in terms of the jump rate $\lambda$, while in the papers cited before, they give a Lyapunov function depending on their potential $U$.
\end{Rema}

\begin{proof}
We can check that $H$ is in the domain of the generator $L$. For $(x,v)\in E$ we have:

\begin{equation}\label{LH2}
LH(x,v)= \expo^{\alpha \normx}\left( A_1 + A_2 + A_3 \right)
\end{equation}

with
\begin{align*}
A_1 &= \alpha\frac{x\cdot v}{\normx}\left( a + \varphi\left( \frac{x\cdot v}{\normx}\right) \right) \\
A_2 &= \varphi^{'}\left(\frac{x\cdot v}{\normx}\right)\frac{1}{\normx}\left( \normv^2-\left( \frac{x\cdot v}{\normx}\right)^2 \right) \\
A_3 &= \lambda(x,v)\int_{\mathcal{B}(1)}\left( \varphi\left( \frac{x\cdot v'}{\normx}\right) - \varphi\left( \frac{x\cdot v}{\normx} \right) \right)Q(x,v,\dt v').
\end{align*}

Let first remark that since the derivative of $\varphi$ is bounded by $m$, we always have 
\[ A_2 \leq \frac{m}{\normx}. \]
Moreover, thanks to Assumption $\Htrois$ and the definition of the function $\varphi$, if $x$ and $v$ are such that $\frac{x\cdot v}{\normx}>-\bar{\theta}$, we can bound $A_3$ as follows:
\begin{align*}
A_3 &\leq \lambda(x,v)\left[ \int_{\{ v'\in \mathcal{B}(1) / \frac{x\cdot v'}{\normx}\leq -\theta_0\}} \varphi\left( \frac{x\cdot v'}{\normx}\right) Q(x,v,\dt v') - \varphi\left( \frac{x\cdot v}{\normx}\right)\right]\\
&\leq \lambda(x,v) \left[ -\theta_0\int_{\{ v'\in \mathcal{B}(1) / \frac{x\cdot v'}{\normx}\leq -\theta_0\}} Q(x,v,\dt v') - \varphi\left( \frac{x\cdot v}{\normx}\right)\right]\\
&\leq -\lambda(x,v)\left[ p\theta_0+\varphi\left( \frac{x\cdot v}{\normx}\right) \right].
\end{align*}
We can now study Equation \eqref{LH2} depending on the different values taken by $\frac{x\cdot v}{\normx}$.
\begin{itemize}
\item[$\bullet$] If $\frac{x\cdot v}{\normx}\in [-1,-\theta_1]$:
\begin{align*}
LH(x,v)&\leq \expo^{\alpha\normx} \left[ -\alpha \theta_1(a-1)+ \frac{m }{\normx}+\lambda_{\max}\right].
\end{align*} 
\item[$\bullet$] If $\frac{x\cdot v}{\normx}\in (-\theta_1,0)$:
\begin{align*}
 LH(x,v)&\leq \expo^{\alpha\normx}\left[ \frac{m}{\normx}-\lambda(x,v)\left(p\theta_0-\theta_1\right)\right] \\
 &\leq \expo^{\alpha \normx}\left[ \frac{m}{\normx}-\lambda_{\min}\left(p\theta_0-\theta_1\right)\right].
 \end{align*}
\item[$\bullet$] If $\frac{x\cdot v}{\normx}\in [0,\theta_*]$:
\begin{align*}
 LH(x,v)&\leq \expo^{\alpha\normx}\left[\alpha\theta_* a+\frac{m}{\normx}-\lambda_{\min}p\theta_0 \right].
 \end{align*}
\item[$\bullet$] If $\frac{x\cdot v}{\normx}\in (\theta_*, 1]$, and if $\normx \geq \Delta$, with $\Delta$ defined in Assumption $\Hquatre$:
\begin{align*}
LH(x,v)&\leq \expo^{\alpha\normx}\left[ \alpha a+\frac{m}{\normx}-p\theta_0\beta\lambda_{\max}\right].
\end{align*}
\end{itemize}
Let us now take $R>r$ where
\[ r=\max \left\{ \frac{m}{\alpha\theta_1(a-1)-\lambda_{\max} }; \frac{m}{\lambda_{\min}\left(p\theta_0-\theta_1\right)}; \frac{m}{p\theta_0\lambda_{\min}-\alpha\theta_*a }; \Delta; \frac{m}{p\theta_0\beta\lambda_{\max}-\alpha a}\right\}. \]
Thanks to the assumptions made on each parameter, the constant $r$ is well defined and finite. Therefore, the previous calculations give the existence of a constant $\eta>0$ such that for all $(x,v)\in (\RR^d\backslash\mathcal{B}(R))\times \mathcal{B}(1)$:
\[ LH(x,v)\leq -\eta\expo^{\alpha\normx}. \]
Finally, the function $(y,w)\longmapsto a+\varphi\left( \frac{y\cdot w}{\lvert y \lvert}\right)$ being bounded from above by $a$, we get:
\[ LH(x,v)\leq -\eta aH(x,v), \]
which ends the proof.
\end{proof}

\subsection{Proof of Theorem \ref{Theoprincipal}}\label{Subsection Proof of the main result}

Since we have already found a Lyapunov function, we still have to review three points in order to use Theorem \ref{Theo Lyapunov function} : the irreducibility and the aperiodicity of the process, and the existence of petite sets. These are the object of the following proposition.

\begin{Prop}\label{Prop petite set, irreducibility, aperiodicity}
All compact sets of the form $K\times \mathcal{B}(1)$, with $K$ a compact set of $\RR^d$, are petite for the process $(X,V)$.\\
Moreover, the process is irreducible and aperiodic.
\end{Prop}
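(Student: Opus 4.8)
The plan is to verify the three conditions separately, starting with the petite-set property since it is the most substantial. The key structural fact I would exploit is that, regardless of the starting point $(x,v)\in K\times\mathcal{B}(1)$, after a bounded number of jumps the process can be steered (with uniformly positive probability density) into a fixed neighbourhood of the origin with a prescribed velocity, and that from there the position distribution after a short deterministic flow time has a density bounded below on a fixed open set. Concretely, I would fix the sampling measure $\nu$ on $\RR_+$ to be, say, an exponential (or a well-chosen mixture), so that $\int_0^\infty \tP_t(x,\cdot)\,\nu(\dt t)$ picks up contributions from trajectories with $0,1,2,\dots$ jumps. For $x$ in the compact $K$, Assumption $\Hdeux$ is not available everywhere, but $\Hun$ gives $\lambda\ge\lambda_{\min}>0$ so jumps do happen, and on the compact set $K'\times\mathcal{B}(1)$ traversed during a bounded time horizon, $\lambda$ is bounded above (it is, e.g., assumed measurable/locally bounded, or one uses that it is bounded on the relevant region). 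Then: (i) with positive probability the first jump occurs within a fixed time window; (ii) by $\Htrois$, at that jump the new velocity lands in the cone $\{x\cdot v'/|x|\le-\theta_0\}$ with probability at least $p$; (iii) iterating finitely many times, one reaches a configuration inside a fixed ball $\mathcal{B}(r_0)$; (iv) conditionally on no jump during a final short interval and on the last velocity lying in a fixed open set of directions (again guaranteed a density lower bound via $\Htrois$ together with absolute continuity of $Q$ — here one may need an extra mild regularity assumption on $Q$, or one argues at the level of the $n$-step transition with the free-flow map as in the standard PDMP literature), the position $X_t = X_{T_n} + (t-T_n)V_n$ spreads out a $d$-dimensional density. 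The composition of these steps produces, for every $x\in K$, a common minorising measure $\mu$ supported on a fixed open set in $E$, which is exactly \eqref{petite}. I would cite the analogous constructions in \cite{BR}, \cite{FGM15} or \cite{Mon} for the technique.

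For irreducibility: taking $\varphi = \mathrm{Leb}$ restricted to a fixed open set $U\subset\RR^d$ times $\mathrm{Leb}$ on $\mathcal{B}(1)$ (or just Lebesgue on a small ball in $E$), I would show that from any $(x,v)$ the process reaches any such $U\times\mathcal{B}(\rho)$ with positive probability in finite time, and in fact spends positive expected time there, so that $\EE_{x,v}\big[\int_0^\infty \mathbf{1}_{(X_t,V_t)\in A}\,\dt t\big]>0$ whenever $\varphi(A)>0$. This is essentially the same controllability argument as in the petite-set proof — drive the particle toward the origin using $\Htrois$ to pick favourable velocities at jump times, then use the free flow to sweep out an open region — and the existence of the Lyapunov function from Proposition \ref{Prop Lyapunov function} ensures the process does not escape to infinity, so the reachable set is genuinely the whole of (the relevant part of) $E$.

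For aperiodicity: by the definition recalled in Section \ref{Section Preliminaries}, it suffices to exhibit one small set $C\in\mathcal{B}^+(E)$ and a time $T$ such that $\tP_t(x,C)>0$ for all $t\ge T$ and all $x\in C$. I would take $C$ a small ball around the origin (with small velocities) — it is small, hence petite, by the construction above, and it lies in $\mathcal{B}^+(E)$ by the irreducibility just proved. Aperiodicity then follows because, from any point of $C$, and for \emph{every} sufficiently large $t$ (not just a lattice of times), there is positive probability of being back in $C$: one can realise this by, e.g., making one extra jump to adjust the travel time continuously, using that jump times have an absolutely continuous distribution with density bounded below (from $\Hun$) on the relevant window. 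The main obstacle, and the place where the argument needs the most care, is step (iv) of the petite-set construction — producing a genuine $d$-dimensional lower bound on the law of $(X_t,V_t)$ uniformly over $x\in K$ — since this is where one must combine the one-dimensional-in-the-radial-variable spreading from the flow with whatever smoothing $Q$ provides in the angular variables; the cleanest route is to work with the $n$-jump transition kernel written explicitly as an integral over jump times and post-jump velocities against $\prod Q$ and $\prod e^{-\int\lambda}$, and to bound this integrand below on an open set, exactly as is done for the Zig-Zag and bouncy particle samplers in \cite{BFR}, \cite{BCDV} and \cite{Mon}.
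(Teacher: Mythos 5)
Your proposal follows essentially the same route as the paper: the paper also establishes the petite-set property through a resolvent-type minorisation $\int_0^\infty \expo^{-t}\EE_{(x,v)}[\varphi(X_t,V_t)]\,\dt t \geq C\iint_A \varphi(y,w)\,\dt y\,\dt w$ built from controlled jump-and-flow trajectories using only $\Hun$, $\Hdeux$, $\Htrois$, deferring the technical details to Lemma 2 of \cite{DBCD}, and then deduces irreducibility from this bound and aperiodicity by the same standard argument (Lemma 3 of \cite{DBCD}). The smoothing issue you flag in step (iv) — obtaining a genuine $d$-dimensional lower bound from the flow together with whatever regularity $Q$ provides — is precisely the point the paper leaves implicit by citing \cite{DBCD}, so your attempt is, if anything, more explicit about where the care is needed.
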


\begin{proof}
We do not detail the proof since we just have follow the ideas of the proof of Lemma 2 in \cite{DBCD}. It is indeed enough to show that for all $M>0$, for all $(x,v)\in \mathcal{B}(M)\times\mathcal{B}(1)$, for all positive bounded function $\varphi: E\rightarrow \RR$, there exist a constant $C>0$ and a compact set $A\subset\RR^d\times \mathcal{B}(1)$, both independent of $M$ and $\varphi$, such that:
\[ \int_0^\infty \expo^{-t}\EE_{(x,v)}\left[ \varphi(X_t,V_t) \right] \dt t \geq C \int \int_A \varphi(y,w) \dt y \dt w.\]
to deduce the first result of the proposition. Let us mention that the proof is based only on Assumptions $\Hun$, $\Hdeux$ and $\Htrois$.\\
Then, the irreducibility of the process follows easily, and we refer once more to \cite{DBCD}, Lemma 3, for the aperiodicity.
\end{proof}

\begin{Rema}
Let us mention different proofs of this result in some particular cases.\\
In dimension $1$, a proof has been given in \cite{BR} for the process with a velocity equal to $-1$ or $+1$, with the same main ideas that in \cite{DBCD} and \cite{Mon}.\\
For the process in dimension $2$, if the jump kernel has a density with respect to the Lebesgue measure which is bounded from below by a strictly positive constant, and if the jump rate is bounded, the previous proposition can be proved by geometric considerations, as in the proof of Lemma 4.5 in \cite{CV}.

\end{Rema}

\begin{proof}[Proof of Theorem \ref{Theoprincipal}] 
By Proposition \ref{Prop Lyapunov function} and \ref{Prop petite set, irreducibility, aperiodicity}, all conditions of \ref{Theo Lyapunov function} are satisfied, so that the stated result follows.
\end{proof}

\section{Exponential moments for the invariant measure}\label{Section Exponential moments for the invariant measure}

In this section, we want to study the existence of exponential moments for the unique invariant probabilty measure of the process $(X,V)$, that we denote by $\pi$, whose existence follows from Theorem \ref{Theoprincipal}. \\

Let first say that if the lengths of the consecutive excursions away from the origin were independent and identically distributed, the process $(X,V)$ would be a regenerative process, and we could then apply standard results on regenerative processes (see \cite{Asm} or \cite{Coco} for instance) to collect some information on the invariant measure of the process. This is for instance what is done in \cite{FGM10}, since the simple Zig-zag process is a regenerative process when we look at it at the successive hitting-times of the origin. However, the excursions away from the origin of the generalized Zig-zag process do not satisfy this assumptions of independence and identical distribution because of the dependence in speed between two consecutive excursions.\\
Nevertheless, the generalized Zig-zag process is a semi-regenerative process (see \cite{Asm} or \cite{Coco} for the definition). Indeed, let $Z_0=0$ and $(Z_n)_{n\geq 1}$ be the successive times at which the process $X$ enters the ball $\mathcal{B}(R)$ (with $R$ defined in Proposition \ref{Prop Lyapunov function}). The PDMP $(X,V)$ is a semi-regenerative process associated to the renewal process $((X_{Z_n},V_{Z_n}),Z_n)_{n\geq 0}$.\\

Before stating the result on regenerative process that we will use in this part, let us first briefly speak about Harris-recurrence and non-arithmetic process (see for instance \cite{AsmRen,Asm,Coco}). The Markov chain $Y$ with state space $F$ is called Harris-recurrent if there exists a measurable set $A\subset F$, $c>0$, $m\geq 1$, and a distribution $\varphi$ such that
\begin{enumerate}
\item for all $y\in F$, $\PP_y\left(\tau_A<\infty \right)=1$, where $\tau_A=\inf\{n\geq 0, Y_n\in A \}$;
\item for all $y\in A$, $\PP_y\left(Y_m \in \cdot \right) \geq c \varphi (\cdot)$.
\end{enumerate}
Then, we recall that the process $(Y_n,T_n)_{n\geq 0}$ is non-arithmetic if the laws of the variables $T_n-T_{n-1}$, $n\geq 1$ have a part which is absolutely continuous with respect to the Lebesgue measure (see \cite{Coco} for the definition of an arithmetic process).
\\

We have the following result concerning the convergence of semi-regenerative process, that we can found in \cite{AsmRen,Asm} or \cite{Coco}:

\begin{Theo}\label{thmconvSP}
Let $(\Psi_t)_{t\geq 0}$ be a semi-regenerative process associated to the renewal process $(Y,T)=(Y_n,T_n)_{n\geq 0}$.\\
We suppose that $(Y,T)$ is non-arithmetic and that $Y$ is Harris-recurrent, and let $\nu$ be an invariant measure for this chain.\\
Let $f: F\longrightarrow \RR$ be a measurable positive function  such that $(z,t)\longmapsto \EE\left[f(\Psi_t)\lvert Y_0=z\right]$ is bounded on $F\times [0,t]$ for all $t$. We define $g:F\times \RR_+\longrightarrow \RR$ by $g(z,t)=\EE\left[ f(\Psi_t)\mathbf{1}_{t<T_1} \lvert Y_0=z\right]$.\\
We suppose :
\begin{enumerate}
\item for $\nu$-almost all $z\in F$, $t\mapsto g(z,t)$ is almost everywhere continuous with respect to the Lebesgue measure;
\item there exists $\Delta>0$ such that
\[ \int_F \sum_{n\in\NN} \sup_{n\Delta\leq t < (n+1)\Delta} \lvert g(z,t)\lvert \nu(dz) < \infty. \]
\end{enumerate}
Then for $\nu$-almost all $y\in F$ :
\begin{equation}\label{eqconvSP}
 \EE\left[f(\Psi_t)\lvert Y_0=y\right] \underset{t\rightarrow\infty}{\longrightarrow} \dfrac{\int_F \EE\left[\int_0^{T_1} f(\Psi_s) ds \lvert Y_0=z\right] \nu(dz) }{\int_F \EE\left[T_1 \lvert Y_0=z\right] \nu(dz) }.
\end{equation}
\end{Theo}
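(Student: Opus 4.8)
The plan is to recognise $(\Psi_t)_{t\ge 0}$ as a \emph{semi-regenerative} (Markov-regenerative) process driven by the Markov renewal process $(Y_n,T_n)_{n\ge 0}$, and to reduce the statement to the key renewal theorem for Markov renewal processes. First I would introduce the Markov renewal measure $U(y;dz,ds)=\sum_{n\ge 0}\PP_y(Y_n\in dz,\ T_n\in ds)$ of the chain $(Y_n,T_n)$ started from $Y_0=y$, $T_0=0$. The construction in Definition~\ref{defSP} shows that, conditionally on $\{Y_n=z,\ T_n=s\}$, the shifted process $(\Psi_{s+u})_{u\ge 0}$ has the law of a switching process started from $z$ and is, given $(Y_n,T_n)$, independent of $(\zeta^{(1)},\dots,\zeta^{(n)},Y_1,T_1,\dots,Y_n,T_n)$. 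Decomposing $\EE_y[f(\Psi_t)]$ according to the index $n$ of the last jump before time $t$ and using this conditional regeneration yields the Markov renewal equation
\[
\EE_y[f(\Psi_t)] = \int_F\int_{[0,t]} g(z,t-s)\,U(y;dz,ds),
\]
with $g(z,u)=\EE[f(\Psi_u)\mathbf 1_{u<T_1}\mid Y_0=z]$ exactly as in the statement; the assumed boundedness of $(z,t)\mapsto \EE[f(\Psi_t)\mid Y_0=z]$ makes every term finite and legitimises exchanging the sum over $n$ with the integration.

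Next I would invoke the key renewal theorem for Markov renewal processes. Here the Harris-recurrence of $Y$ with invariant measure $\nu$ and the non-arithmeticity of $(Y,T)$ are precisely the structural hypotheses under which this theorem applies in the non-arithmetic Harris case (compare the classical real-valued version in \cite{Asm} and the Markov-modulated version in \cite{Coco}). It gives, for $\nu$-almost every $y$,
\[
\int_F\int_{[0,t]} g(z,t-s)\,U(y;dz,ds)\ \xrightarrow[t\to\infty]{}\ \frac{\int_F \left(\int_0^\infty g(z,u)\,du\right)\nu(dz)}{\int_F \EE[T_1\mid\zeta_0=z]\,\nu(dz)},
\]
the right-hand side being manifestly invariant under rescaling of $\nu$. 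What the theorem actually requires is that $(z,u)\mapsto g(z,u)$ be ``directly Riemann integrable relative to $\nu$'': this is exactly what hypotheses (1) and (2) are meant to encode, (1) supplying the almost-everywhere continuity of $u\mapsto g(z,u)$ for $\nu$-a.e.\ $z$, and (2), the summability $\int_F\sum_n \sup_{n\Delta\le t<(n+1)\Delta}|g(z,t)|\,\nu(dz)<\infty$, playing the role of the summable majorant, uniformly over the chain.

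I expect the step of genuinely checking that (1)+(2) imply the hypothesis of the Markov renewal theorem to be the main obstacle: one has to pass from the ``fibered'' summability over $F$ to a uniform control of the error terms in the renewal asymptotics, which typically combines a dominated-convergence argument with the ergodic theorem for the Harris chain $Y$ (and with the usual approximation of a directly Riemann integrable function by step functions over the mesh $\Delta$). Finally I would identify the limiting constant: by Tonelli,
\[
\int_0^\infty g(z,u)\,du = \int_0^\infty \EE\bigl[f(\Psi_u)\mathbf 1_{u<T_1}\mid Y_0=z\bigr]\,du = \EE\Bigl[\int_0^{T_1} f(\Psi_u)\,du \ \Big|\ Y_0=z\Bigr],
\]
and on $\{u<T_1\}$ the construction gives $\Psi_u=\zeta^{(1)}_u$ with $\mathcal L(\zeta^{(1)})=\mathcal L(\zeta\mid\zeta_0=z)$, so this equals $\EE[\int_0^{T_1} f(\zeta_s)\,ds\mid\zeta_0=z]$. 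Substituting into the displayed limit yields exactly \eqref{eqconvSP}. (Upgrading ``$\nu$-a.e.\ $y$'' to ``all $y$'' would need an extra aperiodicity/irreducibility input on $Y$, which is not part of the statement.)
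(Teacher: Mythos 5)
The paper gives no proof of this statement: Theorem \ref{thmconvSP} is quoted directly from \cite{Coco} and used as a black box, so there is no internal argument to compare yours against. That said, your sketch follows what is essentially the standard (and, in substance, the cited reference's) route: view $(\Psi_t)_{t\ge 0}$ as a semi-regenerative process over the Markov renewal process $(Y_n,T_n)_{n\ge 0}$, use the conditional regeneration built into Definition \ref{defSP} to derive the Markov renewal equation $\EE_y[f(\Psi_t)]=\int_F\int_{[0,t]}g(z,t-s)\,U(y;dz,ds)$, invoke the key renewal theorem for non-arithmetic, Harris-recurrent Markov renewal processes with hypotheses (1)--(2) playing the role of a $\nu$-averaged direct-Riemann-integrability condition, and identify the limiting constant via Tonelli together with $\Psi_u=\zeta^{(1)}_u$ on $\{u<T_1\}$, which correctly turns $\int_0^\infty g(z,u)\,du$ into $\EE[\int_0^{T_1}f(\zeta_s)\,ds\mid\zeta_0=z]$. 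The decomposition, the role of each hypothesis, and the identification of the limit are all right.

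The one caveat is that, as written, your argument is a reduction rather than a self-contained proof: the ``Markov renewal key theorem'' you appeal to under exactly these hypotheses is essentially the theorem being stated, and you yourself flag the passage from (1)--(2) to its hypotheses as the main open step. To make the proof independent you would have to carry out that step, typically by Nummelin splitting of the Harris chain $Y$ to produce genuine i.i.d.\ regeneration cycles, showing that the cycle-averaged analogue of $t\mapsto\int_F g(z,t)\,\nu(dz)$ is directly Riemann integrable (this is where (1) and (2) are consumed, and where non-arithmeticity of $(Y,T)$ is needed), and then applying the classical key renewal theorem as in \cite{Asm}. Your closing remark that the convergence is only claimed for $\nu$-almost every starting point $y$ is consistent with the statement.
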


Consequently, the previous theorem can be applied to our process, and it implies the existence of exponential moments for the invariant measure $\pi$ of the PDMP $(X,V)$. More precisely we have the following result:

\begin{Theo}\label{Theo exponential moments of the invariant measure}
Let $(X_t,V_t)_{t\geq 0}$ be the generalized Zig-zag process satisfying Assumptions $\Hun$, $\Hdeux$, $\Htrois$ and $\Hquatre$.\\
Let $\eta$ be as in Proposition \ref{Prop Lyapunov function}. Then for all $0<\beta<\eta$ and all $\gamma>0$, we have :
\[ \int_{E} \mathrm{e}^{\beta \lvert x \lvert + \gamma \lvert v\lvert }\pi(dx,dv) <\infty, \]
where we recall that $\pi$ is the unique invariant probability measure of the process $(X,V)$.
\end{Theo}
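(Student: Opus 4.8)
The plan is to apply Theorem \ref{thmconvSP} to the switching process representation of $(X,V)$ built on the return times $(Z_n)_{n\geq 0}$ to the ball $\mathcal{B}(1)$, taking $f(x,v)=\mathrm{e}^{\beta\normx+\gamma\normv}$. Since $\normv\leq 1$ always, the factor $\mathrm{e}^{\gamma\normv}$ is bounded by $\mathrm{e}^\gamma$ and plays no essential role; the content is in controlling $\mathrm{e}^{\beta\normx}$. First I would check the hypotheses of Theorem \ref{thmconvSP}: the chain $((X_{Z_n},V_{Z_n}),Z_n)_{n\geq 0}$ is non-arithmetic because the inter-jump times $\tau_n$ have densities (from the exponential-clock construction recalled at the end of Section \ref{Section Preliminaries}), so the $Z_{n+1}-Z_n$ inherit an absolutely continuous part; and Harris-recurrence of $(X_{Z_n},V_{Z_n})$ should follow from the ergodicity established in Theorem \ref{Theoprincipal} together with the petite-set / irreducibility statements of Proposition \ref{Prop petite set, irreducibility, aperiodicity} (the set $\mathcal{S}^{d-1}\times\mathcal{B}(1)$ on which the chain lives is compact, hence petite, hence an accessible Harris set). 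Let $\nu$ denote an invariant measure for this chain; note $\nu$ is supported on the compact set $\{\normx=1\}$.

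The key computation is then the right-hand side of \eqref{eqconvSP}: we must show
\[
\frac{\int \EE\left[\int_0^{Z_1} f(X_s,V_s)\,ds\,\Big|\,(X_0,V_0)=z\right]\nu(dz)}{\int\EE\left[Z_1\,\big|\,(X_0,V_0)=z\right]\nu(dz)}<\infty,
\]
and also that the integrability hypotheses (1)--(2) of Theorem \ref{thmconvSP} hold for $g(z,t)=\EE[f(X_t,V_t)\mathbf 1_{t<Z_1}\mid (X_0,V_0)=z]$. The denominator is easily bounded below by a positive constant (it is at least the expected time for the deterministic flow from a point of $\mathcal{S}^{d-1}$ with inward velocity to leave and re-enter $\mathcal{B}(1)$, which one can lower-bound uniformly, or simply note it is positive and $\nu$ is a probability measure after normalisation). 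So the real work is bounding the numerator, i.e. controlling $\EE_z[\int_0^{Z_1}\mathrm{e}^{\beta\normx[X_s]}\,ds]$ uniformly for $z$ in the compact support of $\nu$. This is exactly where the Lyapunov function $H(x,v)=\mathrm{e}^{\alpha\normx}(a+\varphi(x\cdot v/\normx))$ of Proposition \ref{Prop Lyapunov function} enters: from $LH\leq -\eta H$ outside $\mathcal{B}(R)$, a standard Dynkin / Gronwall argument gives, for a stopping time $\sigma$ that is the exit time of a large ball,
\[
\EE_z\!\left[\int_0^{Z_1\wedge\sigma}\eta H(X_s,V_s)\,ds\right]\leq H(z)+ C,
\]
with $C$ absorbing the contribution of the bounded region $\mathcal{B}(R)$ where $LH$ may be positive but is bounded. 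Since $H(x,v)\asymp \mathrm{e}^{\alpha\normx}$ up to the fixed constants $a-1\leq a+\varphi(\cdot)\leq a$, this controls $\EE_z[\int_0^{Z_1}\mathrm{e}^{\alpha\normx}\,ds]$, and choosing $\beta<\eta$ — rather than $\beta<\alpha$ — requires the slightly sharper bookkeeping of iterating the Lyapunov estimate over excursion pieces, using that $H$ decays at geometric rate $\mathrm{e}^{-\eta t}$ along the flow between visits to $\mathcal{B}(R)$; this yields a bound of the form $\EE_z[\int_0^{Z_1}\mathrm{e}^{\beta\normx}ds]<\infty$ for every $\beta<\eta$ and is finite uniformly on $\{\normx=1\}$ since $H$ is bounded there.

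Having bounded the numerator uniformly on $\mathrm{supp}\,\nu$ and the denominator below, the ratio is finite, and Theorem \ref{thmconvSP} gives $\EE_y[f(X_t,V_t)]\to c<\infty$ as $t\to\infty$ for $\nu$-a.e.\ starting point $y$. On the other hand, by Theorem \ref{Theoprincipal} the law of $(X_t,V_t)$ converges to $\pi$ in total variation, and since $f\geq 1$ is nonnegative, Fatou's lemma gives $\int_E f\,d\pi\leq \liminf_{t\to\infty}\EE_y[f(X_t,V_t)]=c<\infty$, which is exactly the claimed exponential moment bound $\int_E \mathrm{e}^{\beta\normx+\gamma\normv}\pi(dx,dv)<\infty$. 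The main obstacle is the numerator estimate in the regime $\alpha<\beta<\eta$: the Lyapunov drift directly controls growth at rate $\alpha$, so recovering the full range up to $\eta$ needs the exponential-decay-along-excursions refinement of the Dynkin estimate rather than a one-shot application; verifying hypothesis (2) of Theorem \ref{thmconvSP} (the summability of $\sup_{n\Delta\le t<(n+1)\Delta}|g(z,t)|$ against $\nu$) rests on the same type of bound, namely that $g(z,t)\leq \mathrm{e}^\gamma\,\EE_z[\mathrm{e}^{\beta\normx[X_t]}\mathbf 1_{t<Z_1}]$ decays in $t$ fast enough, which again follows from the Lyapunov inequality applied to the killed-at-$Z_1$ process.
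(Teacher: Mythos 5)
Your overall architecture coincides with the paper's: represent $(X,V)$ as a switching process over the successive entrance times $Z_n$ into $\mathcal{B}(1)$, check that $((X_{Z_n},V_{Z_n}),Z_n)$ is non-arithmetic and Harris-recurrent, apply Theorem \ref{thmconvSP} with $f(x,v)=\mathrm{e}^{\beta\lvert x\lvert+\gamma\lvert v\lvert}$, and identify the limit with $\int_E f\,\dt\pi$ (your truncation/Fatou argument from total-variation convergence is fine, if anything more careful than the paper's appeal to the ergodic theorem; your treatment of Harris recurrence via petite sets is a minor variation of the paper's direct argument). The genuine gap is precisely the step you flag as ``the main obstacle'': the bound on $g(z,t)=\EE_z\left[f(X_t,V_t)\mathbf{1}_{t<Z_1}\right]$ and on the numerator of \eqref{eqconvSP} over the whole range $\beta<\eta$. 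The Lyapunov function of Proposition \ref{Prop Lyapunov function} only carries the spatial exponential $\mathrm{e}^{\alpha\lvert x\lvert}$, so any Dynkin/Gronwall estimate based on $H$ (killed at $Z_1$ or at exits from $\mathcal{B}(R)$) controls expectations of $\mathrm{e}^{\alpha\lvert X_t\lvert}$ only; ``iterating the Lyapunov estimate over excursion pieces'' contains no mechanism that raises the spatial exponent from $\alpha$ to an arbitrary $\beta<\eta$, and since $\alpha$ and $\eta$ are unrelated constants, the claimed range $\beta<\eta$ is left unproved by your argument.

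The missing ingredient is elementary and is how the paper closes this step: the speed is bounded by $1$ and the chain lives on $\mathcal{S}^{d-1}\times\mathcal{B}(1)$, so along the excursion one has the deterministic bound $\lvert X_t\lvert\leq 1+t$, whence $g(z,t)\leq \mathrm{e}^{\beta(1+t)+\gamma}\,\PP_z(Z_1>t)$. By Markov's inequality and Proposition \ref{Proposition hitting time of B(1)} (that is, $\EE_z\left[\mathrm{e}^{\eta Z_1}\right]\leq H(z)$, which is uniformly bounded on $\mathcal{S}^{d-1}\times\mathcal{B}(1)$), one gets $\PP_z(Z_1>t)\leq M\mathrm{e}^{-\eta t}$ uniformly in $z$ on the sphere, hence $g(z,t)\leq M\mathrm{e}^{\beta+\gamma}\mathrm{e}^{(\beta-\eta)t}$. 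This single estimate simultaneously verifies hypothesis (2) of Theorem \ref{thmconvSP} and makes $\int_0^\infty g(z,t)\,\dt t$, hence the numerator of \eqref{eqconvSP}, finite uniformly on the support of $\nu$ for every $\beta<\eta$: the spatial exponent is converted into a temporal one by finite speed, and only the temporal rate $\eta$ (not $\alpha$) matters. If you replace your Lyapunov-iteration step by this finite-speed conversion, the rest of your proof goes through.
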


\begin{Rema}
This result ensures that at the equilibrium, the process is concentrated around the origin. From a modelling point of view, it means that in long time, the bacterium evolves around the chemo-attractant and does not go too far from it.
\end{Rema}

This theorem is a consequence of Proposition \ref{Prop Lyapunov function}, which implies the existence of exponential moments for the hitting time of the compact set $\mathcal{B}(R)$ associated to the Lyapunov function. Let us precise this fact:

\begin{Prop}\label{Proposition hitting time of B(1)}
Let us note $\tau_{\mathcal{B}(R)}=\inf\{ t\geq 0, X_t\in \mathcal{B}(R)\}$ the hitting time of $\mathcal{B}(R)$. For all $(x,v)\in E$,
\[ \EE_{x,v}\left[ \expo^{\eta \tau_{\mathcal{B}(R)}} \right]\leq H(x,v), \]
where $\eta$ and $H$ are defined in Proposition \ref{Prop Lyapunov function}.
\end{Prop}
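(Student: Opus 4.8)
The plan is to deduce the estimate from the differential inequality $LH\le-\eta H$ of Proposition~\ref{Prop Lyapunov function} via Dynkin's formula. Write $Z=(X,V)$, let $\tau:=\tau_{\mathcal B(R)}$ be the hitting time of the ball on whose complement the Lyapunov inequality holds, and consider $N_t:=\expo^{\eta(t\wedge\tau)}H\big(Z_{t\wedge\tau}\big)$. To run Dynkin's formula cleanly I would first introduce the localising times $\sigma_n:=\inf\{t\ge0:\normx\ge n\}$ for $n>\normx$; since $\normv\le1$ the position moves at speed at most $1$, so $\sigma_n\uparrow\infty$ almost surely, and on $[0,\sigma_n\wedge t]$ the process is confined to a fixed compact set on which $\lambda$ is bounded, hence performs only finitely many jumps. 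Dynkin's formula applied to $(s,z)\mapsto\expo^{\eta s}H(z)$ on $[0,\tau\wedge\sigma_n\wedge t]$ then gives
\[
\EE_{x,v}\!\left[\expo^{\eta(\tau\wedge\sigma_n\wedge t)}H\big(Z_{\tau\wedge\sigma_n\wedge t}\big)\right]=H(x,v)+\EE_{x,v}\!\left[\int_0^{\tau\wedge\sigma_n\wedge t}\expo^{\eta s}\big(\eta H+LH\big)(X_s,V_s)\,\dt s\right],
\]
and on $[0,\tau)$ one has $\normx>R$, so Proposition~\ref{Prop Lyapunov function} makes the integrand nonpositive; the left-hand side is therefore at most $H(x,v)$ for every $n$ and $t$.

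Letting $n\to\infty$ and then $t\to\infty$ and invoking Fatou's lemma, the contribution of $\{\tau=\infty\}$ carries a factor $\expo^{\eta(\sigma_n\wedge t)}\to\infty$ against $H\ge1$, so finiteness of the bound forces $\tau=\tau_{\mathcal B(R)}<\infty$ almost surely, and on $\{\tau<\infty\}$ it yields $\EE_{x,v}[\expo^{\eta\tau_{\mathcal B(R)}}]\le\EE_{x,v}[\expo^{\eta\tau_{\mathcal B(R)}}H(Z_\tau)]\le H(x,v)$, using $H\ge1$. If $R\le1$ this already gives the proposition, since then $\mathcal B(R)\subseteq\mathcal B(1)$ and hence $\tau_{\mathcal B(1)}\le\tau_{\mathcal B(R)}$. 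If $R>1$ I would bridge the remaining compact shell using the strong Markov property at $\tau_{\mathcal B(R)}$, which gives, for $\normx>R$, $\EE_{x,v}[\expo^{\eta\tau_{\mathcal B(1)}}]=\EE_{x,v}\big[\expo^{\eta\tau_{\mathcal B(R)}}\EE_{Z_{\tau_{\mathcal B(R)}}}[\expo^{\eta\tau_{\mathcal B(1)}}]\big]\le C\,H(x,v)$ with $C:=\sup_{(y,w)\in\overline{\mathcal B(R)}\times\mathcal B(1)}\EE_{y,w}[\expo^{\eta\tau_{\mathcal B(1)}}]$; the same bound holds directly for $\normx\le R$ since $H\ge1$; and $C<\infty$ follows from Assumptions $\Hun$--$\Htrois$ by the geometric argument already used for the petite set in Proposition~\ref{Prop petite set, irreducibility, aperiodicity} (from any point of that compact set the process enters $\mathcal B(1)$ within a fixed time with probability bounded below --- note that a velocity drawn in $\{x\cdot v'/\normx\le-\theta_0\}$ necessarily has $\normv\ge\theta_0$ and keeps $x\cdot v<0$, hence $\lambda\le\lambda_{\max}$, until the closest approach --- so $\tau_{\mathcal B(1)}$ is there dominated by a geometric sum of bounded times), after shrinking $\eta$ if necessary; this is harmless, as $\eta$ is only required to be some positive constant and the supermartingale bound above persists for every smaller value.

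The step I expect to be the main obstacle is the rigorous justification of the supermartingale property: ensuring the martingale part in Dynkin's formula is a genuine martingale up to $\tau\wedge\sigma_n\wedge t$ and that no accumulation of jumps occurs, given that $\lambda$ is only assumed bounded on $\{x\cdot v\le0\}$ and may be arbitrarily large elsewhere. The remedy is exactly the localisation above --- the speed bound $\normv\le1$ rules out spatial explosion and keeps the process in compacts on which $\lambda$ is bounded, so all the terms appearing are integrable and the exchange of limits with Fatou is legitimate. The remaining points (the dichotomy $R\le1$ versus $R>1$, and the reduction to $H\ge1$) are routine.
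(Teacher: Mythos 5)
Your argument rests on exactly the same mechanism as the paper's proof: apply Dynkin's formula to $\expo^{\eta t}H(X_t,V_t)$ stopped at the hitting time, use the drift inequality of Proposition~\ref{Prop Lyapunov function} to make the integral term nonpositive, and pass to the limit ($H\geq 1$ then discards the factor $H(Z_\tau)$); the paper does this directly with $\tau=\tau_{\mathcal{B}(1)}$ and monotone convergence, without any localisation, so your stopping times $\sigma_n$ are a refinement in rigour rather than a change of route. The genuine divergence is your treatment of the radius: the paper's proof tacitly applies $LH+\eta H\leq 0$ along the whole trajectory up to $\tau_{\mathcal{B}(1)}$, i.e.\ it implicitly assumes the Lyapunov inequality holds on all of $\RR^d\setminus\mathcal{B}(1)$, which Proposition~\ref{Prop Lyapunov function} only guarantees when $R\leq 1$ (and $R$, being at least $\Delta$, may well exceed $1$). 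Your first step therefore proves cleanly what the paper's argument actually delivers, namely $\EE_{x,v}[\expo^{\eta\tau_{\mathcal{B}(R)}}]\leq H(x,v)$, and your bridging step for $R>1$ addresses a real gap in the statement rather than creating one. The price is that in that case you only obtain $\EE_{x,v}[\expo^{\eta'\tau_{\mathcal{B}(1)}}]\leq C\,H(x,v)$ with an extra constant and a possibly reduced exponent $\eta'$, which is strictly weaker than the inequality as stated (though entirely sufficient for its use in Section~\ref{Section Exponential moments for the invariant measure}, where one could equally well regenerate on the sphere of radius $R$).

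Two small caveats on the bridge. First, your sketch of $C<\infty$ is not quite complete as stated: a single inward leg with $\frac{x\cdot v'}{\normx}\leq-\theta_0$ only brings the closest approach down to $\normx\sqrt{1-\theta_0^2}$, which need not be below $1$, so the correct statement is a uniform lower bound, over the compact shell, on the probability of entering $\mathcal{B}(1)$ within a fixed time (possibly after several favourable jumps) --- this is exactly the estimate underlying Proposition~\ref{Prop petite set, irreducibility, aperiodicity}, and then the geometric-trials argument and Markov property give exponential moments for $\eta'$ small enough. Second, shrinking $\eta$ is not completely free of consequences: the exponent $\eta$ reappears quantitatively in Theorem~\ref{Theo exponential moments of the invariant measure} (the range $\beta<\eta$), so if you take this route you should propagate the smaller exponent there.
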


\begin{proof}
In order to make the proof easier to read, we note $\tau$ instead of $\tau_{\mathcal{B}(R)}$.\\
For $(x,v)\in E$, Dynkin's formula gives:
\begin{align*}
\EE_{x,v}\left[ \expo^{\eta (t\wedge\tau)}\right] &\leq  \EE_{x,v}\left[ H\left( X_{t\wedge\tau},V_{t\wedge\tau} \right) \expo^{\eta(t\wedge\tau)}\right]\\
&= H(x,v) + \EE_{x,v}\left[ \int_0^{t\wedge\tau} (\eta + L)H\left( X_{s\wedge\tau},V_{s\wedge\tau} \right) \expo^{\eta(s\wedge\tau)} \dt s \right]\\
&\leq H(x,v),
\end{align*}
the last inequality resulting from Proposition \ref{Prop Lyapunov function}.\\
Then, when $t$ goes to infinity, the monotone convergence theorem gives:
\[ \EE_{x,v}\left[ \expo^{\eta \tau} \right]\leq H(x,v).\]
\end{proof}

\begin{proof}[Proof of Theorem \ref{Theo exponential moments of the invariant measure}]
Let first remark that the chain $(X_{Z_n},V_{Z_n})_{n\geq 0}$ is Harris-recurrent. Indeed, let $A=\mathcal{S}_R^{d-1}\times\mathcal{B}(1)$ be the state space of $(X_{Z_n},V_{Z_n})_{n\geq 0}$ ($\mathcal{S}_R^{d-1}$ denotes the sphere of $\RR^d$ with radius $R$). We obviously have, for all $(x,v)\in A$, $\PP\left( \tau_A<\infty\right) =1$. The second point in the definition of the Harris-recurrence can be proved with $\varphi$ the Lebesgue measure on $\RR^d$, using in particular the fact that $Z_m$ is almost-surely finite for each $m\geq 1$ thanks to Proposition \ref{Proposition hitting time of B(1)}.\\
Moreover, the chain $((X_{Z_n},V_{Z_n}),Z_n)_{n\geq 0}$ is non-arithmetic because the law of the times $Z_{n+1}-Z_n$ has a part which is absolutely continuous with respect to the Lebesgue measure.\\
Let us now introduce $f : (x,v)\in \RR^d\times \mathcal{B}(1) \longmapsto \mathrm{e}^{\beta\lvert x \lvert + \gamma\lvert v \lvert}$ for $0\leq \beta<\eta$ and $\gamma\geq 0$.\\
We first observe that $f$ is a positive measurable function, and that 
\[((x,v),t)\longrightarrow\EE\left[ f(X_t,V_t) \lvert (X_{Z_n},V_{Z_n})=(x,v)\right] \] is bounded on $\left(\mathcal{S}_R^{d-1}\times\mathcal{B}(1)\right)\times [0,t]$ for all $t$.\\ 
Let $\nu$ be the unique invariant probability measure of the chain $(X_{Z_n},V_{Z_n})_{n\geq0}$, which exists since the chain is positive Harris-recurrent. \\
Let us define the function $g$ on $\left(\mathcal{S}_R^{d-1}\times\mathcal{B}(1) \right)\times \RR_+$ by
\[ g\left((x,v),t\right)=\EE_{x,v}\left[ \mathrm{e}^{\beta\lvert X_t\lvert + \gamma \lvert V_t \lvert} \mathbf{1}_{t<Z_1} \right], \]
and let us check if $g$ satisfies the assumption of Theorem \ref{thmconvSP}:
\begin{enumerate}
\item For $\nu$-almost all $(x,v)\in \mathcal{S}_R^{d-1}\times\mathcal{B}(1)$, the function $t\longmapsto g((x,v),t)$ is almost everywhere continuous  with respect to the Lebesgue measure since it is right-continuous and has thus an at most countable set of discontinuities.
\item Let $\Delta>0$. According to Markov inequality we have 
\[\PP_{x,v}\left( Z_1>t\right)\leq \expo^{-\eta t}\EE_{x,v}\left[\expo^{\eta Z_1}\right].\]
Using Proposition \ref{Proposition hitting time of B(1)} and the fact that the Lyapunov function $H$ is uniformly bounded on $\mathcal{S}_R^{d-1}\times\mathcal{B}(1)$, we get the existence of a finite constant $M$ such that for all $(x,v)\in \mathcal{S}_R^{d-1}\times\mathcal{B}(1)$,
\[ \PP_{x,v}\left( Z_1 >t\right) \leq M\expo^{-\eta t}. \]\\
Moreover, let remark that if $(X_0,V_0)\in \mathcal{S}_R^{d-1}\times\mathcal{B}(1)$, since $\lvert X_0 \lvert = R$ and the velocity of the process lives in $\mathcal{B}(1)$, then for all $t\geq 0$, $\lvert X_t\lvert \leq R+t$.\\
Therefore we have
\begin{align*}
\int_{\mathcal{S}^{d-1}\times\mathcal{B}(1)} &\sum_{n\in\NN} \sup_{n\Delta\leq t < (n+1)\Delta}  \lvert g\left( (x,v),t \right) \lvert \nu(\dt x,\dt v)\\
&= \int_{\mathcal{S}^{d-1}\times\mathcal{B}(1)} \sum_{n\in\NN} \sup_{n\Delta\leq t < (n+1)\Delta} \EE_{x,v}\left[ \mathrm{e}^{\beta \lvert X_t \lvert + \gamma \lvert V_t \lvert}\mathbf{1}_{t<Z_1}\right] \nu(\dt x,\dt v) \\
&\leq \int_{\mathcal{S}^{d-1}\times\mathcal{B}(1)} \sum_{n\in\NN} \sup_{n\Delta\leq t < (n+1)\Delta} \mathrm{e}^{\beta(R+t)+\gamma} \PP_{x,v}\left( Z_1>t\right) \nu(\dt x,\dt v)\\
&\leq \int_{\mathcal{S}^{d-1}\times\mathcal{B}(1)} \sum_{n\in\NN} \sup_{n\Delta\leq t < (n+1)\Delta} \mathrm{e}^{\beta(R+ t) +\gamma} M\mathrm{e}^{-\eta t}\nu(\dt x,\dt v)\\
&\leq M\mathrm{e}^{\beta R+\gamma} \sum_{n\in\NN} \mathrm{e}^{\beta (n+1)\Delta - \eta n\Delta}\\
&= M\mathrm{e}^{\beta(R+\Delta) + \gamma}\sum_{n\in\NN} \left(\mathrm{e}^{\Delta(\beta-\eta)}\right)^n.
\end{align*}
This quantity is finite since $\beta<\eta$.
\end{enumerate}
The function $g$ satisfying all the required assumptions, we can apply Theorem \ref{thmconvSP}: for $\nu$-almost all $(x_0,v_0)\in \mathcal{S}_R^{d-1}\times\mathcal{B}(1)$ we have
\[ \EE_{x_0,v_0}\left[ \mathrm{e}^{\beta\lvert X_t\lvert +\gamma\lvert V_t\lvert}\right] \underset{t\rightarrow\infty}{\longrightarrow} \dfrac{\int_{\mathcal{S}_R^{d-1}\times\mathcal{B}(1)} \EE_{x,v}\left[ \int_0^{Z_1} \mathrm{e}^{\beta\lvert X_s\lvert +\gamma\lvert V_s\lvert}\dt s \right]\nu(\dt x,\dt v) }{\int_{\mathcal{S}_R^{d-1}\times\mathcal{B}(1)}\EE_{x,v}\left[Z_1\right]\nu(\dt x,\dt v)}. \]
The quantity on the right is clearly finite thanks to the previous calculations. Moreover, the ergodic theorem gives:
\[ \EE_{x_0,v_0}\left[ \mathrm{e}^{\beta\lvert X_t\lvert +\gamma\lvert V_t\lvert}\right] \underset{t\rightarrow\infty}{\longrightarrow} \int_{\RR^d\times\mathcal{B}(1)} \mathrm{e}^{\beta \lvert x \lvert +\gamma \lvert v \lvert} \pi(\dt x,\dt v). \]
We then deduce that
\[ \int_{\RR^d\times\mathcal{B}(1)} \mathrm{e}^{\beta \lvert x \lvert +\gamma \lvert v \lvert} \pi(\dt x,\dt v) = \dfrac{\int_{\mathcal{S}_R^{d-1}\times\mathcal{B}(1)} \EE_{x,v}\left[ \int_0^{Z_1} \mathrm{e}^{\beta\lvert X_s\lvert +\gamma\lvert V_s\lvert} \dt s\right]\nu(\dt x,\dt v) }{\int_{\mathcal{S}_R^{d-1}\times\mathcal{B}(1)}\EE_{x,v}\left[Z_1\right]\nu(\dt x,\dt v)}, \]
and we have proved the theorem.
\end{proof}

\section{The particular case of dimension $1$}\label{Section The particular case of dimension $1$}

In this section, we study PDMPs in dimension $1$, but in a different way, and under some different assumptions (see Section \ref{Subsection Comparison between the two studies} for a comparison between the two approaches in dimension $1$). This section is thus complementary to the previous study applied at the one-dimension.\\
We consider here the PDMP $((X_t,V_t))_{t\geq 0}$ with values in $\RR\times [-1,1]$ with infinitesimal generator  given by
\begin{equation}\label{generateur dim1}
Lh(x,v)=v\partial_x h(x,v)+\lambda(x,v)\int_{-1}^{1} \left( h(x,v')-h(x,v)\right)Q(x,v,\mathrm{d}v'),
\end{equation}
where $Q(x,v,\cdot)$ is a probability kernel on $[-1,1]$. This process is the one-dimensional version of the process driven by \eqref{generateur}.\\  

The framework of this section is the following:
 \begin{itemize}
 \item[($\mathcal{A}_1$):] $Q(x,v,\dt v')=q(v,v')(\nu(\dt v')+\mu(\dt v'))$ with $\nu$ a discrete measure and $\mu$ a restriction of the Lebesgue measure. We denote by $\mathcal{V}$ the support of $Q$ and suppose that there exists a constant $q_{\min}>0$ such that $q(v,v')\geq q_{\min}$ for all $v,v'\in \mathcal{V}$;
 \item[($\mathcal{A}_2$):] The process is symmetric: $\mathcal{V}$ is symmetric with respect to $0$ and $\lambda(x,v)=\lambda(-x,-v)$ for all $(x,v)\in\RR\times\mathcal{V}$;
 \item[($\mathcal{A}_3$):] There exists $0<\lambda_{\min}$ such that for all $(x,v)\in \RR\times\mathcal{V}$, $\lambda_{\min}\leq \lambda(x,v)$, and the quantity $\sup_{x\geq 0, v\leq 0}\lambda(x,v)$ is finite.
 \end{itemize}
 
The importance of the existence of $q_{min}$ in Assumption ($\mathcal{A}_1$) will not explicitly appear in the following since we are not going to give all the proofs in detail, but we note that it is useful in Proposition \ref{Prop petite set dim1} and Theorem \ref{Theorem exponential moments of the invariant measure dim 1}. \\
Assumption $(\mathcal{A}_2)$ implies that the process is spatially symmetric with respect to the origin, which will allow us to reduce the number of computations. Nevertheless, the following results are still available without this symmetry.\\
The fact that we suppose the kernel $Q$ to be independent of the position of the bacterium implies that the chain composed by the velocities at the jump times is a Markov chain with kernel $Q$. In particular, contrary to what is assumed in higher dimension, the attraction of the bacterium by the origin is not favoured by the jump kernel since it does not take care of the position of the bacterium  with respect to the origin.\\ 
We make thus an additional assumption $(\mathcal{A}_4)$ that takes into account this attraction to the origin, because of the presence of a chemo-attractant there. This assumption is the one that makes the process ergodic.
 
 \begin{multline*}
  (\mathcal{A}_4) : \exists I_* \subset\left[ 0,\underset{x\geq 0,v\in (0,1]}{\inf}\frac{\lambda(x,v)}{v}\right) \text{ an interval } I_*, \exists 0<J_*<1, \forall \alpha\in I_*, \forall v'\in \mathcal{V}, \\  \int_{-1}^1 G(\alpha,v) Q(v',\dt v)\leq J_*,
 \end{multline*}
where
\begin{equation}\label{expression G}
G(\alpha,v)=\frac{\underset{x\geq 0}{\sup}\lambda(x,v)}{\underset{x\geq 0}{\sup}\lambda(x,v)-\alpha v}\mathbf{1}_{v<0} + \frac{\underset{x\geq 0}{\inf}\lambda(x,v)}{\underset{x\geq 0}{\inf}\lambda(x,v)-\alpha v}\mathbf{1}_{v\geq 0}
\end{equation}
for $\alpha\geq 0$ and $v\in \mathcal{V}$. \\

Before stating the theorem that gives the exponential ergodicity of the process, we make some remarks on Assumption $(\mathcal{A}_4)$.

\begin{Rema}\label{Rem lien avec CRS}
Let us give a sufficient condition to satisfy Assumption $(\mathcal{A}_4)$ when the jump kernel $Q$ does not depend on the previous velocity. \\
Let us write $J_{v'}(\alpha)=\int_{-1}^1 G(\alpha,v) Q(v',\dt v)$.\\
For all $v'\in [-1,1]$, the function $J_{v'}$ is $C^1$ on $[0, \underset{x\geq 0,v\in ]0,1]}{\inf}\frac{\lambda(x,v)}{v})$, is convex and
\[J_{v'}(0)=1 \text{ , } J_{v'}'(0)=\int_{-1}^1 \left(\frac{v}{\underset{x\geq 0}{\sup}\lambda(x,v)}\mathbf{1}_{v<0} + \frac{v}{\underset{x\geq 0}{\inf}\lambda(x,v)}\mathbf{1}_{v\geq 0} \right)Q(v',\dt v) \]
\[ \text{ and } \underset{\alpha\rightarrow \underset{x\geq 0,v\in ]0,1]}{\inf}\frac{\lambda(x,v)}{v}}{\lim} J_{v'}(\alpha) = + \infty.\]
If we assume that $J_{v'}'(0)<0$, we get that there exists a unique $\hat{\alpha}\in (0,\underset{x\geq 0,v\in ]0,1]}{\inf}\frac{\lambda(x,v)}{v})$ such that $J(\hat{\alpha})=1$, and then there exists an interval $\hat{I}\subset (0,\alpha)$ such that for all $\alpha\in \hat{I}$, $J_{v'}(\alpha)$ is uniformly bounded by a constant strictly smaller than $1$ on the interval $\hat{I}$.\\
Consequently, for all $v'\in [-1,1]$, 
\[\exists \hat{I}\subset (0,\underset{x\geq 0,v\in (0,1]}{\inf}\frac{\lambda(x,v)}{v}), \exists 0<\hat{K}<1, \forall \alpha\in \hat{I},   \int_{-1}^1 G(\alpha,v) Q(v',\dt v)\leq \hat{K}.\]
In Assumption $(\mathcal{A}_4)$, we suppose in addition that the interval $\hat{I}$ and the constant $\hat{K}$ are uniform in $v'$. In particular, if $Q(v',\dt v)=Q(\dt v)$, the assumption $J'(0)<0$, that is
\[   \int_{-1}^1 \left(\frac{v}{\underset{x\geq 0}{\sup}\lambda(x,v)}\mathbf{1}_{v<0} + \frac{v}{\underset{x\geq 0}{\inf}\lambda(x,v)}\mathbf{1}_{v\geq 0} \right)Q(\dt v)<0 \]
is sufficient for Assumption $(\mathcal{A}_4)$ to be satisfied.\\
This assumption is the equivalent of Assumption $(H3)$ made by Calvez, Raoul and Schmeiser in \cite{CRS}. The probability study carried out in this section covers thus the framework of \cite{CRS}.

\end{Rema}

\begin{Rema}\label{Rem lien avec BR FGM}
Let see that in the case where the kernel $Q$ does not depend on the previous velocity and is symmetric in the sense that $q(v)=q(-v)$ for all $v\in\mathcal{V}$, Assumption $(\mathcal{A}_4)$ holds in the particular case where  $\underset{x\geq 0}{\inf}\lambda(x,v) > \underset{x\leq 0}{\sup}\lambda(x,v)$ for all $v\in [0,1]$, i.e. when the velocity tends to jump even more when the bacterium goes away from $0$ than when it moves towards the origin. This case is the one considered for instance in \cite{BR}, \cite{FGM10} and \cite{FGM15}, with a velocity taking its values in $\{-1,+1\}$\\
Let us prove this fact. Under this assumption on the jump rate, with the same notations as in the previous remark, and using the symmetries of the process we have:
\begin{align*}
J_{v'}'(0)&=\int_{-1}^1 \left(\frac{v}{\underset{x\geq 0}{\sup}\lambda(x,v)}\mathbf{1}_{v<0} + \frac{v}{\underset{x\geq 0}{\inf}\lambda(x,v)}\mathbf{1}_{v\geq 0} \right)q(v)\dt v\\
&= \int_{-1}^0 \frac{v}{\underset{x\geq 0}{\sup}\lambda(x,v)}q(v)\dt v + \int_0^1 \frac{v}{\underset{x\geq 0}{\inf}\lambda(x,v)}q(v)\dt v\\
&= -\int_0^1 \frac{v}{\underset{x\geq 0}{\sup}\lambda(x,-v)}q(v)\dt v + \int_0^1 \frac{v}{\underset{x\geq 0}{\inf}\lambda(x,v)}q(v)\dt v\\
&= -\int_0^1 \frac{v}{\underset{x\leq 0}{\sup}\lambda(x,v)}q(v)\dt v + \int_0^1 \frac{v}{\underset{x\geq 0}{\inf}\lambda(x,v)}q(v)\dt v\\
&<0.
\end{align*}
And the end of Remark \ref{Rem lien avec CRS} ensures that Assumption $(\mathcal{A}_4)$ is satisfied.
\end{Rema}

Let us now give the main theorem of this section.

\begin{Theo}\label{Theoprincipal dim1}
Let $(X_t,V_t)_{t\geq 0}$ be the PDMP on $\RR\times [-1,1]$ whose infinitesimal generator is given by
\[
Lh(x,v)=v\partial_x h(x,v)+\lambda(x,v)\int_{-1}^1 \left( h(x,v')-h(x,v)\right)Q(x,v,\mathrm{d}v'). \]
Under Assumptions $(\mathcal{A}_1)$, $(\mathcal{A}_2)$, $(\mathcal{A}_3)$ and $(\mathcal{A}_4)$, the process is exponentially ergodic.
\end{Theo}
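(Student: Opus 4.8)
The plan is to apply the Meyn--Tweedie criterion of Theorem~\ref{Theo Lyapunov function} exactly as in Section~\ref{Section Main result}, the only change being \emph{how} the Lyapunov function is produced: rather than guessing an explicit $H$ and computing $LH$, I would fix a compact $K=[-R,R]\times\mathcal{V}$ for a suitable $R>0$ and take $W(x,v)=\EE_{x,v}[\expo^{\delta\tau_K}]$, with $\tau_K=\inf\{t\ge0:(X_t,V_t)\in K\}$ and $\delta>0$ small; by the strong Markov property this $W$ is $\delta$-harmonic off $K$, hence satisfies $LW\le-\delta W+\beta\mathbf 1_K$ as soon as it is finite (and $W\ge1$ trivially), so it is a legitimate Lyapunov function associated to $K$ (equivalently one may appeal directly to the characterisation of exponential ergodicity via exponential moments of hitting times of petite sets, \cite{DMT}). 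That $K$ is petite and that the process is irreducible and aperiodic is obtained exactly as in Proposition~\ref{Prop petite set, irreducibility, aperiodicity} (see also Proposition~\ref{Prop petite set dim1}), using only $(\mathcal{A}_1)$, $(\mathcal{A}_2)$, $(\mathcal{A}_3)$, the constant $q_{\min}$ of $(\mathcal{A}_1)$ furnishing the lower bound on the jump kernel. Everything therefore reduces to exhibiting $\delta>0$ and $R>0$ with $\EE_{x,v}[\expo^{\delta\tau_K}]<\infty$ for every $(x,v)$, and this is where Assumption~$(\mathcal{A}_4)$ enters.

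The heart of the argument is that the function $G$ of~\eqref{expression G} governs the growth of $\expo^{\alpha|X|}$ along the chain of jump times. Fix $\alpha\in I_*$ and start from $(x,v)$ with $x>R>0$: since $X$ is continuous, $X_t>R$ for all $t<\tau_K$, so up to $\tau_K$ the position keeps its sign and $\lambda$ is evaluated only on $\{x\ge0\}$, and in particular no overshoot past $K$ can occur before $\tau_K$. Writing $X_{T_1}=x+vT_1$ and integrating by parts the survival function $\overline F$ recalled in Section~\ref{Section Preliminaries} --- using $\alpha v<\inf_{x\ge0}\lambda(x,v)$ for $v\in(0,1]$, which holds because $\alpha\in I_*$, and $\alpha v\le0$ for $v\le0$ --- one gets that the expected one-jump multiplicative factor $\EE_{x,v}[\expo^{\alpha(|X_{T_1\wedge\tau_K}|-|x|)}]$ is at most $G(\alpha,v)$ when $v\ge0$, and converges to $G(\alpha,v)$ as $x\to\infty$ when $v<0$; this is precisely why $G$ carries $\sup_{x\ge0}\lambda$ on $\{v<0\}$ and $\inf_{x\ge0}\lambda$ on $\{v\ge0\}$. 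Since in dimension~$1$ the velocities at the successive jump times form a Markov chain with kernel $Q$ (because $Q$ does not depend on the position), averaging this bound over the post-jump velocity and invoking Assumption~$(\mathcal{A}_4)$ gives, once $|X_{T_n}|$ exceeds some level,
\[
\EE\!\left[\expo^{\alpha|X_{T_{n+1}\wedge\tau_K}|}\,\big|\,\mathcal F_{T_n}\right]\;\le\;\rho\,\expo^{\alpha|X_{T_n}|},\qquad \rho<1,
\]
with $\rho$ obtained from $(\mathcal{A}_4)$ by a small enlargement of $J_*$. Iterating this supermartingale-type inequality produces a geometric tail, $\PP_{x,v}(\tau_K>T_n)\lesssim\expo^{\alpha|x|}\rho^{\,n}$, so the number of jumps before $\tau_K$ has exponential moments; on the remaining bounded range of heights, where this contraction is not available, one uses $\lambda\ge\lambda_{\min}$ together with the petiteness estimate to see that $K$ is reached within a time possessing an exponential moment. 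Because each inter-jump time is, conditionally on the past, stochastically dominated by an exponential variable of parameter $\lambda_{\min}$, the geometric control on the number of jumps then converts into $\EE_{x,v}[\expo^{\delta\tau_K}]<\infty$ for $\delta>0$ small enough, with a bound $\lesssim\expo^{\alpha|x|}$; the spatial symmetry $(\mathcal{A}_2)$ reduces the case $x<-R$ to $x>R$, and $|x|\le R$ is trivial.

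Once the finiteness of $\EE_{x,v}[\expo^{\delta\tau_K}]$ is in hand, $W=\EE_\cdot[\expo^{\delta\tau_K}]$ is a Lyapunov function associated to the compact petite set $K$, the process is irreducible and aperiodic, and Theorem~\ref{Theo Lyapunov function} yields the exponential ergodicity. I expect the main obstacle to be the hitting-time estimate of the second paragraph: turning Assumption~$(\mathcal{A}_4)$ --- a statement about the \emph{averaged one-jump} growth factor $\int G(\alpha,v)\,Q(v',\dt v)$ --- into a genuine geometric contraction of $\expo^{\alpha|X|}$ along the chain stopped at $\tau_K$ (the continuity of $X$, which forbids any sign change, hence any overshoot, before $\tau_K$, is what makes $G$ a legitimate bound), then handling the bounded range of heights near $\partial K$ where no contraction holds, and finally assembling the geometric tail of the number of jumps with the conditional domination of the inter-jump times into the continuous-time exponential moment. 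By contrast, checking that $\alpha\in I_*$ and $\delta>0$ can be chosen compatibly --- all the denominators appearing in $G$ staying positive, and the product of the geometric factor $\rho$ with the inter-jump factor remaining strictly below $1$ --- is routine, and Remarks~\ref{Rem lien avec CRS} and~\ref{Rem lien avec BR FGM} show that the hypotheses are indeed satisfied in the settings of \cite{CRS}, \cite{FGM10} and \cite{FGM15}.
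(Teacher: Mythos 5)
Your proposal follows essentially the same route as the paper: there, one bounds $\PP_{x_0,v_0}(S>n)\leq \expo^{\alpha |x_0|}C(\alpha,v_0)J_*^{\,n-1}$ (with $S$ the number of jumps before the position changes sign) via the per-jump factor $G(\alpha,V_i)$ and Assumption $(\mathcal{A}_4)$ applied along the velocity chain, then dominates the inter-jump times by i.i.d.\ exponentials of rate $\lambda_{\min}$ to get $\EE_{x_0,v_0}[\expo^{\eta Z}]<\infty$ for $\eta<\lambda_{\min}(1-J_*)$, and concludes by Theorem~\ref{Theo ergodicity tempsderetour} (Theorem 6.2 of \cite{DMT}) together with the petite set of Proposition~\ref{Prop petite set dim1} --- exactly your contraction/hitting-time scheme, with the origin in place of $[-R,R]\times\mathcal{V}$ and the DMT hitting-time criterion in place of the Lyapunov function $W=\EE_\cdot[\expo^{\delta\tau_K}]$, which you rightly note is equivalent. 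The only point needing care is your displayed one-step inequality: since $\mathcal F_{T_n}$ contains the post-jump velocity $V_n$, the conditional factor is $G(\alpha,V_n)$ (possibly $>1$) rather than $\rho<1$, and the contraction only appears after averaging over $Q(V_{n-1},\cdot)$ one step earlier, exactly as in the paper's telescoped product bound, so your "averaging over the post-jump velocity" must be aligned accordingly.
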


\subsection{The hitting time of the origin}

As mentioned before, we are going to estimate the exponential moments of the hitting times of compact sets in order to prove Theorem \ref{Theoprincipal dim1}. We introduce two new notations.\\
We denote by $Z$ the first hitting time of $0$, i.e.
\begin{equation}\label{temps de retour en 0 Z}
Z= \inf \{ t>0, X_t=0\},
\end{equation}
and $S$ is the index of the first jump-time at which the position of the process has changed its sign: $S=\inf\{ n\geq 1, X_{T_n}X_0\leq 0 \}$.\\
Our goal in this section is to get information on the exponential moments of $Z$. For this purpose, we will first study the random variable $S$. Then, the inequality $Z\leq \sum_{i=1}^{S} \tau_i$ a.s. will allow us to come back to $Z$.

\begin{Prop}
For all $(x_0,v_0)\in \RR\times \mathcal{V} $ and all $\alpha$ such that $\lvert \alpha \lvert\in I_*$, we have, for all $n\geq 1$,
\[ \PP_{x_0,v_0}\left( S>n \right) \leq \kappa_{\alpha,x_0,v_0}  {J_*}^{n-1}, \]
with
\[ \kappa_{\alpha,x_0,v_0}= \mathrm{e}^{\lvert\alpha x_0\lvert}G\left( \lvert \alpha \lvert,v_0\right), \]
where $G$ and $J_*$ are defined by Assumption $(\mathcal{A}_4)$.
\end{Prop}

\begin{proof}
In order to make the proof easier to read, we distinguish the cases where $x_0$ and $v_0$ are positive or negative. We first look at the case $x_0>0$ and $v_0\in \mathcal{V}\cap [0,1]$, and the other cases are similar because of the symmetry of the process.\\ \\
Let $x_0>0$ and $v_0\in \mathcal{V}\cap [0,1]$.\\
We have $S=\inf\{n\geq 1, X_{T_n}\leq 0\}$, and $X_{T_n}=X_0+\sum_{i=0}^{n-1} V_i\tau_{i+1}=x_0+v_0\tau_1+\sum_{i=1}^{n-1} V_i\tau_{i+1}$.\\
 Let $\alpha\in [0, \underset{x\geq 0,v\in (0,1]}{\inf}\frac{\lambda(x,v)}{v})$.\\
Since $x_0$ is positive, on the event $\{S>n \}$, $X_{T_n}$ is also positive, and the sign of $\alpha$ implies:

\begin{align*}
\PP_{x_0,v_0} (S>n) &\leq \EE_{x_0,v_0} \left[ \expo^{\alpha X_{T_n}} \mathbf{1}_{S>n} \right]\\
 &= \expo^{\alpha x_0} \EE_{x_0,v_0} \left[ \expo^{\alpha\left( v_0\tau_1 +\sum_{i=1}^{n-1} V_i\tau_{i+1}\right)} \mathbf{1}_{S>n}\right]\\
 &=\expo^{\alpha x_0} \EE_{x_0,v_0} \left[ \expo^{\alpha v_0\tau_1}\prod_{i=1}^{n-1}\expo^{ V_i\tau_{i+1}} \mathbf{1}_{S>n}\right]\\
 &= \expo^{\alpha x_0} \EE_{x_0,v_0} \left[ \EE_{x_0,v_0}\left[\expo^{\alpha v_0\tau_1} \prod_{i=1}^{n-1}\expo^{ V_i\tau_{i+1}} \mathbf{1}_{S>n}\lvert X_{T_1},V_1,\ldots,X_{T_{n-1}},V_{n-1} \right]\right].
\end{align*}
Since the inter-jump times $\tau_1,\ldots,\tau_n$ are independent conditionally to the couples \\$(X_0,V_0),(X_{T_1},V_1),\ldots,(X_{T_{n-1}},V_{n-1})$, and since $\{S>n\}=\{X_{T_1}>0,\cdots,X_{T_n}>0\}$ we have:
\begin{equation}\label{P(S>n)}
\PP_{x_0,v_0} (S>n) \leq \expo^{\alpha x_0}\EE_{x_0,v_0}\left[\EE_{x_0,v_0}\left[\expo^{\alpha v_0\tau_1}\right] \prod_{i=1}^{n-1}\EE_{x_0,v_0}\left[ \expo^{\alpha V_i\tau_{i+1}} \lvert X_{T_i},V_i \right]\mathbf{1}_{X_{T_1>0},\cdots,X_{T_n>0}}\right].
\end{equation}
Moreover, we know that
\[ \PP\left(\tau_{i+1} \geq t \lvert X_{T_i}, V_i \right) = \overline{F}(t,X_{T_i},V_i), \]
which gives, for $1\leq i \leq n-1$:
\begin{align*}
\EE & \left[\expo^{\alpha V_i\tau_{i+1}}\lvert X_{T_i},V_i\right]\\
&= \int_0^{+\infty} \mathrm{e}^{\alpha V_i t} \lambda(X_{T_i}+V_i t, V_i)\mathrm{e}^{-\int_0^t \lambda(X_{T_i}+V_i s, V_i)\mathrm{d}s}\mathrm{d}t  \\
&= \int_0^{+\infty} \frac{\lambda(X_{T_i}+V_i t, V_i)}{\lambda(X_{T_i}+V_i t, V_i)-\alpha V_i}\left( \lambda(X_{T_i}+V_i t, V_i) - \alpha V_i \right) \mathrm{e}^{-\int_0^t \left( \lambda(X_{T_i}+V_i t, V_i) - \alpha V_i\right) \mathrm{d}s}\mathrm{d}t  .
\end{align*}
Furthermore, for $\alpha\in \RR$ and $v\in [-1,1]$ the function $\lambda\in\RR_+ \longmapsto\frac{\lambda}{\lambda-\alpha v}$ is increasing if $\alpha v\leq 0$ and decreasing otherwise. Therefore, if $V_i\geq 0$, since on $\{S>n\}$ the position of the bacterium is positive, we get:
\begin{align*}
\EE & \left[\expo^{\alpha V_i\tau_{i+1}} \lvert X_{T_i},V_i\right]\mathbf{1}_{X_{T_1>0},\cdots,X_{T_n>0}} \\
&\leq \frac{\underset{x\geq 0}{\inf}\lambda(x,V_i)}{\underset{x\geq 0}{\inf}\lambda(x,V_i)-\alpha V_i} \int_0^{+\infty} \left( \lambda(X_{T_i}+V_i t, V_i) - \alpha V_i \right) \mathrm{e}^{-\int_0^t \left( \lambda(X_{T_i}+V_i t, V_i) - \alpha V_i\right) \mathrm{d}s}\mathrm{d}t \\
&= \frac{\underset{x\geq 0}{\inf}\lambda(x,V_i)}{\underset{x\geq 0}{\inf}\lambda(x,V_i)-\alpha V_i}.
\end{align*}
In the case $V_i<0$ we get in the same way:
\begin{equation*}
\EE \left[\expo^{\alpha V_i\tau_{i+1}} \lvert X_{T_i},V_i\right] \mathbf{1}_{X_{T_1>0},\cdots,X_{T_n>0}}\leq \frac{\underset{x\geq 0}{\sup}\lambda(x,V_i)}{\underset{x\geq 0}{\sup}\lambda(x,V_i)-\alpha V_i}.
\end{equation*}
We have thus obtained the following inequality:
\[ \EE \left[\expo^{\alpha V_i\tau_{i+1}} \lvert X_{T_i},V_i,S\right]\mathbf{1}_{X_{T_1>0},\cdots,X_{T_n>0}} \leq G(\alpha,V_i), \]
where $G$ is given by \eqref{expression G}. \\
Getting back to \eqref{P(S>n)}, we get:
\[
\PP_{x_0,v_0}  \left(S\right.\left. >n\right) \leq \expo^{\alpha x_0}G(\alpha,v_0) \EE_{x_0,v_0}\left[ \prod_{i=1}^{n-1} G(\alpha,V_i) \right].
\]
Using now the fact that $(V_i)_{i\geq 0}$ is a Markov chain with kernel $Q$, we have:
\[
\EE_{x_0,v_0}\left[ \prod_{i=1}^{n-1} G(\alpha,V_i) \right]
 \leq \int_{-1}^1 G(\alpha,v_1)\times \ldots \times \int_{-1}^1 G(\alpha,v_{n-1}) Q(v_{n-2},\dt v_{n-1}) \ldots Q(v_0,\dt v_1).
\]
We deduce from Assumption $(\mathcal{A}_4)$, that if $\alpha\in I_*$ we have $\EE_{x_0,v_0}\left[ \prod_{i=1}^{n-1} G(\alpha,V_i) \right] \leq {J_*}^{n-1}$. And finally:
\begin{equation}\label{P(S>n)1}
\PP_{x_0,v_0}\left( S>n\right) \leq \expo^{\alpha x_0}G(\alpha,v_0) {J_*}^{n-1},
\end{equation}
for all $\alpha\in I_*$ , $x_0>0$ and $v_0\in\mathcal{V}\cap [0,1]$.\\ \\

For $x_0>0$ and $v_0\in\mathcal{V}\cap [-1,0)$, the calculations are exactly the same.\\
The case $x_0\leq 0$ can also be made in the same way. The symmetry of the process
 and Assumption $(\mathcal{A}_4)$ ensure that, for all $\alpha$ such that $-\alpha\in I_*$, for all $x_0\leq 0$ and all $v_0\in \mathcal{V}$,
\begin{equation}\label{P(S>n)2}
\PP_{x_0,v_0}\left( S>n\right) \leq \expo^{\alpha x_0}G(-\alpha,v_0){J_*}^{n-1}.
\end{equation}

Finally, we have proved in $\eqref{P(S>n)1}$ and $\eqref{P(S>n)2}$ that for all $(x_0,v_0)\in \RR\times \mathcal{V}$ and all $\alpha$ such that $\lvert \alpha\lvert \in I_*$,
\begin{equation}
\PP_{x_0,v_0}\left( S>n\right) \leq \expo^{\lvert \alpha x_0\lvert }G(\lvert \alpha\lvert,v_0) {J_*}^{n-1},
\end{equation}
which is the result of the proposition.

\end{proof}

\begin{Prop}\label{Prop temps de retour en 0}
For all $(x_0,v_0)\in \RR\times\mathcal{V}$, all $0<\rho < \lambda_{\min}(1-J_*)$ and all $\alpha$ such that $\lvert \alpha \lvert$ we have:
\[ \EE_{x_0,v_0}\left[ \expo^{\rho Z} \right] \leq K_{\alpha,x_0,v_0}\frac{1}{1-\frac{\lambda_{\min}}{\lambda_{\min}-\rho}J_*}, \]
where
\[ K_{\alpha,x_0,v_0}=\kappa_{\alpha,x_0,v_0}\frac{1-J_*}{{J_*}^2}=\mathrm{e}^{\lvert\alpha x_0\lvert}G\left( \lvert \alpha \lvert,v_0\right)\frac{1-J_*}{{J_*}^2}. \]
Moreover, $K_{\alpha,x_0,v_0}$ is uniformly bounded from above for $(x_0,v_0)\in [-x_1,x_1]\times\mathcal{V}$ for all $x_1>0$.
\end{Prop}

\begin{proof}
As mentioned before, we are going to use the inequality $Z\leq \sum_{i=1}^{S} \tau_i$ a.s. to get some informations on the hitting time of the origin.\\
Let us notice the variables $\tau_i, i\geq 1$ are stochastically smaller than i.i.d. exponential variables $E_i, i\geq 1$ with mean $\frac{1}{\lambda_{\min}}$. Indeed, the survival function of the variable $\tau_{i+1}$, conditionally to $X_{T_i}$ and $V_i$, is $\overline{F}(\cdot,X_{T_i},V_i)=\exp\left(\int_0^t \lambda(X_{T_i}+V_is,V_i)\dt s\right)$, and the jump rate $\lambda$ is uniformly bounded from below by $\lambda_{\min}$. The variables $(E_i)_{i\geq 1}$ can be taken as independent because of the independence of the variables $(\tau_i)_{i\geq 1}$ conditionally to $\left( X_{T_i},V_i \right)_{i\geq 0}$, and can also be taken as independent of the variables $(\tau_i)_{i\geq 1}$.\\
Thanks to this comment and the previous proposition, we get, for $\rho>0$ and $\alpha$ such that $\lvert \alpha \lvert\in I_*$:

\begin{align*}
\EE\left[ \mathrm{e}^{\rho Z }\right] &\leq \EE_{x_0,v_0}\left[ \mathrm{e}^{\rho \sum_{i=1}^{S} \tau_i}\right]\\
&\leq \EE_{x_0,v_0}\left[ \mathrm{e}^{\rho \sum_{i=1}^{S} \tilde{E}_i}\right] \\
&= \EE_{x_0,v_0} \left[ \EE\left[ \mathrm{e}^{\rho \sum_{i=1}^S \tilde{E}_i} \lvert S \right] \right]\\
&= \EE_{x_0,v_0} \left[\prod_{i=1}^S \EE\left[\mathrm{e}^{\rho \tilde{E}_i}\right]\right] \\
&=\EE_{x_0,v_0}\left[ \left(\frac{\lambda_{min}}{\lambda_{min}-\rho}\right)^S\right]\\ 
&= \sum_{n=0}^{+\infty}\left(\frac{\lambda_{min}}{\lambda_{min}-\rho}\right)^n \PP_{x_0,v_0}(S=n)\\
&= \sum_{n=0}^{+\infty}\left(\frac{\lambda_{min}}{\lambda_{min}-\rho}\right)^n \left(\PP(S_{x_0,v_0}>n-1)-\PP(S_{x_0,v_0}>n)\right)\\
&\leq  \expo^{\lvert \alpha x_0\lvert }G(\lvert \alpha\lvert,v_0)\frac{1-J_*}{J_*^2} \sum_{n=0}^{+\infty}\left(\frac{\lambda_{min}}{\lambda_{min}-\rho}J_*\right)^n .
\end{align*}
Finally, for all $\rho>0$ such that $\frac{\lambda_{min}}{\lambda_{min}-\rho}J_*<1$, we get :
\begin{equation}\label{momentsexpoT0}
\EE\left[ \mathrm{e}^{\rho Z }\right] \leq \expo^{\lvert \alpha x_0\lvert }G(\lvert \alpha\lvert,v_0)\frac{1-J_*}{J_*^2} \frac{1}{1-\frac{\lambda_{min}}{\lambda_{min}-\rho}J_*}<\infty,
\end{equation}
which ends the proof of the proposition.
\end{proof}

\subsection{Exponential ergodicity of the process}\label{Subsection Exponential ergodicity of the process}

In this section, we are going to give a proof of the exponential ergodicity of our process based on Proposition \ref{Prop temps de retour en 0}. For this purpose, we recall another result on the exponential ergodicity of a Markov process.

\begin{Theo}{(Theorem $6.2$ in \cite{DMT})}\label{Theo ergodicity tempsderetour}
Let $Y$ be an irreducible and aperiodic Markov process. Suppose that there exists a function $f\geq 1$, a closed measurable set $C\in E$ and some constants $\delta,\eta >0$, $M<\infty$ such that 
\[
\EE_x\left[ \int_0^{\tau_C(\delta)} \mathrm{e}^{\eta t}f(Y_t)\mathrm{d}t\right] < 
 \infty, \hspace{0.3cm} \text{for all } x\notin C 
\]
and 
\[
\sup_{x\in C}\EE_x\left[ \int_0^{\tau_C(\delta)} \mathrm{e}^{\eta t}f(Y_t)\mathrm{d}t\right] \leq M
\]

where $\tau_C(\delta)=\inf\{ t\geq \delta, Y_t\in C\}$.\\
If the set $C$ is petite for $Y$, then the process is exponentially ergodic.
\end{Theo}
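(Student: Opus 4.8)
The plan is to derive this continuous-time criterion from the well-developed discrete-time theory of geometrically ergodic Markov chains, by passing to a sampled chain and reading the integral hypothesis as a geometric Foster--Lyapunov drift condition.

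\emph{Step 1 (a geometric Lyapunov function).} I would set
\[ V(x) = \EE_x\!\left[ \int_0^{\tau_C(\delta)} \expo^{\eta t} f(Y_t)\,\dt t \right], \]
so that $V$ is finite on $E\setminus C$, bounded by $M$ on $C$, and, since $\tau_C(\delta)\geq\delta$ and $f\geq 1$, bounded below by $\eta^{-1}(\expo^{\eta\delta}-1)>0$, hence $\geq 1$ after rescaling; measurability follows by writing $V$ as the increasing limit of the truncated integrals over $[0,\tau_C(\delta)\wedge n]$. The key is to extract a drift inequality for a sampled chain. Using the strong Markov property at $\tau_C(\delta)$, and the fact that the lag $\delta$ makes $\tau_C(\delta)$ behave like a genuine return time for a $\delta$-skeleton, one obtains for the sampled kernel $K_\nu(x,\cdot)=\int_0^\infty \tP_t(x,\cdot)\,\nu(\dt t)$ attached to the petiteness of $C$ (or directly for the $\delta$-skeleton $\Phi_n=Y_{n\delta}$) constants $\kappa\in(0,1)$, $b<\infty$ with $K_\nu V \leq \kappa V + b\,\mathbf{1}_C$; equivalently, via Dynkin's formula and the comparison theorem, a continuous-time drift $\tL V \leq -\beta V + b\,\mathbf{1}_C$.

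\emph{Step 2 (geometric ergodicity of a sampled chain).} Since $Y$ is irreducible and aperiodic and $C$ is petite, a suitable sampled chain has $C$ as a small set; Nummelin's splitting then produces an accessible atom, and the drift of Step~1 (in which $f$ already appears) translates into geometric, $f$-modulated moments of the return time to that atom, uniformly over the atom and finitely for every starting point. The regenerative decomposition at the atom together with Kendall's renewal theorem (geometric tails of the regeneration time yield a geometric rate) shows this sampled chain is $f$-geometrically ergodic; in particular there is a unique invariant probability measure $\pi$ with $\pi(f)<\infty$ and $\lVert K_\nu^{\,n}(x,\cdot)-\pi\rVert_{f}\leq M_0(x)\,\rho_0^{\,n}$.

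\emph{Step 3 (return to continuous time).} Writing $t=n\delta+s$ with $0\leq s<\delta$ and using that $\tP_s V$ is bounded uniformly in $s\in[0,\delta]$ (again from the drift), I would bound $\lVert \tP_t(x,\cdot)-\pi\rVert_{TV}$ by the geometric decay along skeleton times, invoking aperiodicity so that the rate does not degrade over the residual time $s$. This gives $\lVert \tP_t(x,\cdot)-\pi\rVert_{TV}\leq M(x)\rho^t$ for some $M(x)<\infty$ and $\rho\in(0,1)$, which is exactly exponential ergodicity.

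\emph{Main obstacle.} The crux is Step~1: turning the integral bound into a bona fide geometric drift for a discrete chain, where the delayed hitting time $\tau_C(\delta)$ is indispensable, where one must check carefully that $V$ is finite and measurable off $C$ (this is precisely where the comparison theorem enters), and where one must verify that the drift constant is genuinely $<1$. The continuous-time lifting of Step~3 is standard but fiddly. An alternative to the splitting/renewal route of Step~2 is a direct coupling of two copies of $Y$ that coalesce with probability bounded below at each visit to $C$, combined with the geometric return-time estimates coming from the drift.
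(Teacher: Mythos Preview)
The paper does not actually prove this theorem: it is quoted from \cite{DMT}, and the only indication given is the remark that the proof ``relies on the introduction of the function $H_0(x):= 1 + \EE_x\left[ \int_0^{\tau_C(\delta)} \mathrm{e}^{\eta t}f(Y_t)\,\mathrm{d}t\right]$, which plays the role of a Lyapunov function for a Markov chain linked to the process $Y$.'' Your function $V$ in Step~1 is precisely $H_0-1$, so your outline is fully consistent with the proof strategy the paper alludes to; the remaining steps (passing to a sampled chain, splitting/regeneration, lifting back to continuous time) are indeed the standard machinery of \cite{DMT,MT93a,MT93b} that the remark is pointing to.
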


\begin{Rema}
The proof of this theorem relies on the introduction of the function \\ $H_0(x):= 1 + \EE_x\left[ \int_0^{\tau_C(\delta)} \mathrm{e}^{\eta t}f(Y_t)\mathrm{d}t\right]$, which plays the role of a Lyapunov function for a Markov chain linked to the process $Y$.
\end{Rema}

To apply this theorem, we need then, as previously, to find a petite set. The following proposition states that all compact sets are petite for the process $(X,V)$, and is just a consequence of Proposition \ref{Prop petite set, irreducibility, aperiodicity}.

\begin{Prop}\label{Prop petite set dim1}
For all $x_1>0$, the set $C=[-x_1,x_1]\times \mathcal{V}$ is petite for the process $(X,V)$.
\end{Prop}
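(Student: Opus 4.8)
The plan is to reduce the one-dimensional statement to the already-proven multidimensional result, Proposition~\ref{Prop petite set, irreducibility, aperiodicity}. That proposition asserts that all compact sets of the form $K\times\mathcal{B}(1)$ (with $K$ compact in $\RR^d$) are petite for the $d$-dimensional process driven by \eqref{generateur}, and its proof rests only on Assumptions $\Hun$, $\Hdeux$ and $\Htrois$. So the first step is to check that the one-dimensional hypotheses $(\mathcal{A}_1)$, $(\mathcal{A}_2)$, $(\mathcal{A}_3)$ imply the relevant parts of $\Hun$--$\Htrois$ in dimension $d=1$; then the set $C=[-x_1,x_1]\times\mathcal{V}$ is a (subset of a) set of the required form, and a subset of a petite set is petite, which gives the conclusion.

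First I would note that $(\mathcal{A}_3)$ gives directly the lower bound $\lambda\geq\lambda_{\min}>0$, i.e.\ $\Hun$, and, using the symmetry $(\mathcal{A}_2)$, the finiteness of $\sup_{x\geq 0,v\leq 0}\lambda(x,v)$ upgrades to the finiteness of $\sup_{\{xv\leq 0\}}\lambda(x,v)=\lambda_{\max}$, which is $\Hdeux$ in dimension one. The role of $(\mathcal{A}_1)$ is to supply $\Htrois$: the bound $q(v,v')\geq q_{\min}>0$ together with the absolutely continuous component $\mu$ of the reference measure means that from any $(x,v)$ there is positive mass (bounded below uniformly) on velocities pointing towards the origin, i.e.\ on $\{v'\in\mathcal{V}:\,x v'/|x|\le -\theta_0\}$ for a suitable $\theta_0\in(0,1]$ and $p>0$; this is exactly the hypothesis used in the proof in \cite{DBCD} that is cited in Proposition~\ref{Prop petite set, irreducibility, aperiodicity}. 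Alternatively, one can simply observe that the construction of the petite-set minorising measure in \cite{DBCD} only uses the existence of a positive-density component of the jump kernel and the bounds $\lambda_{\min},\lambda_{\max}$, all of which are furnished by $(\mathcal{A}_1)$--$(\mathcal{A}_3)$ here.

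With these observations, the proof is then just one line: apply Proposition~\ref{Prop petite set, irreducibility, aperiodicity} with $d=1$ and $K=[-x_1,x_1]$ to conclude that $[-x_1,x_1]\times\mathcal{B}(1)$ is petite, hence $[-x_1,x_1]\times\mathcal{V}\subset[-x_1,x_1]\times\mathcal{B}(1)$ is petite since any measurable subset of a petite set is petite (the same minorising measure and sampling distribution $\nu$ in \eqref{petite} work verbatim, the defining inequality being required only for $x$ in the smaller set). I expect the only genuine point requiring care to be the bookkeeping in the previous paragraph, namely checking precisely which of $\Hun$--$\Htrois$ are invoked in the \cite{DBCD}-style argument and confirming that $(\mathcal{A}_1)$--$(\mathcal{A}_3)$ deliver them in the one-dimensional setting (in particular that the $q_{\min}$ lower bound plays the part of $\Htrois$); this is why the statement says it "is just a consequence of Proposition~\ref{Prop petite set, irreducibility, aperiodicity}", and the write-up can legitimately be very short, referring back to that proposition and to \cite{DBCD} rather than reproducing the construction.
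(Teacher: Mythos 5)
Your reduction is exactly the paper's own route: the paper proves this proposition by declaring it ``just a consequence of Proposition~\ref{Prop petite set, irreducibility, aperiodicity}'' (whose proof uses only $\Hun$--$\Htrois$, with $q_{\min}$ from $(\mathcal{A}_1)$ playing the role you assign it), so your hypothesis-checking plus the observation that a subset of a petite set is petite matches, and even slightly elaborates, the intended argument. No gap.
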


We can now prove Theorem \ref{Theoprincipal dim1}:

\begin{proof}[Proof of Theorem \ref{Theoprincipal dim1}]
Let $x_1>0$. Proposition \ref{Prop petite set dim1} ensures that the closed set $C=[-x_1,x_1]\times\mathcal{V}$ is petite for the process $(X,V)$.\\
Moreover, let us consider the quantity 
$\EE_{x,v}\left[ \mathrm{e}^{\eta\tau_C(\delta)}  \right]$ with $\eta,\delta >0$. It satisfies the assumptions of Theorem \ref{Theo ergodicity tempsderetour} (we have taken $f\equiv 1$) for $0<\eta < \lambda_{\min}(1-J_*)$ thanks to Proposition \ref{Prop temps de retour en 0}. \\
Finally, let $Leb$ be the Lebesgue measure on $\RR$. The process $(X,V)$ is $Leb\otimes Q$-irreducible, and aperiodic. The proof of these facts can be handled as mentioned previously in the general case.
The conclusion of Theorem \ref{Theo ergodicity tempsderetour} gives then the exponential ergodicity of $(X,V)$.
\end{proof}

\subsection{Exponential moments of the invariant measure}

As in the general case, we can show that the invariant measure, that we still denote by $\pi$, has exponential moments.

\begin{Theo}\label{Theorem exponential moments of the invariant measure dim 1}
Let $\gamma>0$ such that $\frac{\lambda_{\min}}{\lambda_{\min} - \gamma}J_*<1$. \\
For all $0<\alpha<\gamma$ and all $\beta>0$, we have :
\[ \int_{\RR\times [-1,1]} \mathrm{e}^{\alpha \lvert x \lvert + \beta \lvert v\lvert }\pi(dx,dv) <\infty. \]
\end{Theo}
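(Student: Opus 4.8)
The plan is to mimic the argument of Theorem \ref{Theo exponential moments of the invariant measure} in the one-dimensional setting, but exploiting the explicit exponential-moment bound for the hitting time $Z$ of the origin provided by Proposition \ref{Prop temps de retour en 0}. First I would realise the PDMP $(X,V)$ as a switching process associated to the successive hitting times of $0$: set $Z_0=0$ and let $Z_1, Z_2, \ldots$ be the successive return times to the origin, so that the embedded chain is $((X_{Z_n},V_{Z_n}),Z_n)_{n\geq 0} = ((0,V_{Z_n}),Z_n)_{n\geq 0}$. Since the velocity kernel $Q$ does not depend on the position (Assumption $(\mathcal{A}_1)$), the state space of this chain is $\{0\}\times\mathcal{V}$, and it is Harris-recurrent (recurrence uses that $Z$ is a.s. finite with finite exponential moments, Proposition \ref{Prop temps de retour en 0}) and non-arithmetic (the inter-return times $Z_{n+1}-Z_n$ have an absolutely continuous part, inherited from the absolutely continuous part $\mu$ of $Q$). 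Let $\nu$ be its invariant probability measure.

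Next I would apply Theorem \ref{thmconvSP} with $f(x,v) = \mathrm{e}^{\alpha|x| + \beta|v|}$ and $g((0,v),t) = \EE_{0,v}[\mathrm{e}^{\alpha|X_t|+\beta|V_t|}\mathbf{1}_{t<Z_1}]$. The continuity-a.e.\ hypothesis (1) holds because $t\mapsto g$ is right-continuous hence has at most countably many discontinuities. For hypothesis (2), the key point is that starting from $X_0 = 0$ with $|V_t|\leq 1$ one has $|X_t|\leq t$, so $f(X_t,V_t)\mathbf{1}_{t<Z_1} \leq \mathrm{e}^{\alpha t + \beta}\mathbf{1}_{t<Z_1}$, and Markov's inequality together with Proposition \ref{Prop temps de retour en 0} gives $\PP_{0,v}(Z_1 > t) \leq \bar K\,\mathrm{e}^{-\gamma t}$ uniformly in $v\in\mathcal{V}$ (using that $K_{0,v}$ is uniformly bounded on $\{0\}\times\mathcal{V}$). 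Hence
\[
\int_{\{0\}\times\mathcal{V}} \sum_{n\in\NN}\sup_{n\Delta\leq t<(n+1)\Delta}|g((0,v),t)|\,\nu(\dt v)
\leq \bar K\mathrm{e}^{\beta}\sum_{n\in\NN}\mathrm{e}^{\alpha(n+1)\Delta - \gamma n\Delta},
\]
which converges since $\alpha<\gamma$. Theorem \ref{thmconvSP} then yields, for $\nu$-almost all $(0,v_0)$,
\[
\EE_{0,v_0}\left[\mathrm{e}^{\alpha|X_t|+\beta|V_t|}\right] \underset{t\to\infty}{\longrightarrow}
\frac{\int_{\{0\}\times\mathcal{V}} \EE_{0,v}\left[\int_0^{Z_1}\mathrm{e}^{\alpha|X_s|+\beta|V_s|}\dt s\right]\nu(\dt v)}{\int_{\{0\}\times\mathcal{V}}\EE_{0,v}[Z_1]\,\nu(\dt v)},
\]
and the right-hand side is finite: the numerator is bounded by $\mathrm{e}^{\beta}\int \EE_{0,v}[\int_0^{Z_1}\mathrm{e}^{\alpha s}\dt s]\nu(\dt v) \leq \mathrm{e}^{\beta}\alpha^{-1}\int\EE_{0,v}[\mathrm{e}^{\alpha Z_1}]\nu(\dt v)<\infty$ by Proposition \ref{Prop temps de retour en 0} (choosing $\gamma$ close enough to $\lambda_{\min}(1-J_*)$ so that $\alpha<\gamma$ is still admissible in that proposition).

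Finally I would identify the limit with $\pi$: by the ergodic theorem for the exponentially ergodic process (Theorem \ref{Theoprincipal dim1}), $\EE_{x_0,v_0}[\mathrm{e}^{\alpha|X_t|+\beta|V_t|}]\to \int \mathrm{e}^{\alpha|x|+\beta|v|}\pi(\dt x,\dt v)$ whenever this integral is finite; combining with the switching-process limit gives both the finiteness and the explicit value of the integral. The main obstacle I anticipate is the bookkeeping needed to check the hypotheses of Theorem \ref{thmconvSP} rigorously --- in particular verifying Harris recurrence and the non-arithmetic property of the embedded chain on $\{0\}\times\mathcal{V}$, and ensuring that the boundedness of $((x,v),t)\mapsto\EE[f(X_t,V_t)\mid(X_{Z_n},V_{Z_n})=(x,v)]$ on bounded time intervals holds (which again follows from $|X_t|\leq t$ starting from the origin). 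The interplay between the two admissible ranges $\alpha<\gamma$ and $\frac{\lambda_{\min}}{\lambda_{\min}-\gamma}J_*<1$ must also be handled so that Proposition \ref{Prop temps de retour en 0} can be invoked with the right exponent throughout; this is exactly where the current approach is quantitatively sharper than the Meyn--Tweedie route.
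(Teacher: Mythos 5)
Your proposal is correct and follows essentially the same route as the paper: the paper's own proof is a two-line sketch that views $(X,V)$ as a switching process between successive hitting times of the origin, invokes Proposition \ref{Prop temps de retour en 0} for the exponential tail of $Z_1$, and checks the hypotheses of Theorem \ref{thmconvSP} exactly as in Section \ref{Section Exponential moments for the invariant measure}. Your write-up simply fills in those verifications (Harris recurrence, non-arithmeticity, the bound $|X_t|\leq t$ from the origin, and the geometric series using $\alpha<\gamma$) at the same level of detail as the paper's $d$-dimensional argument, so no further changes are needed.
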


\begin{proof}
The proof relies, as in Section \ref{Section Exponential moments for the invariant measure}, on Theorem \ref{thmconvSP}. We see the process $(X,V)$ as a semi-regenerative process between the successive hitting times of $0$. Then, using the upper bound of the exponential moments of the hitting time of $0$ obtained in Proposition \ref{Prop temps de retour en 0}, we can verify that the assumptions of Theorem \ref{thmconvSP} are satisfied, and the result follows.
\end{proof}

\subsection{Comparison between the two studies}\label{Subsection Comparison between the two studies}

Let us first recall that the proof of the exponential ergodicity carried out in any dimension is obviously relevant in dimension $1$, whereas the converse is not possible, or at least not directly. Indeed, the process in dimension $1$ is quite simple since it goes towards the origin or not in terms of the sign of the scalar product $X_t\cdot V_t$. In higher dimension, if we want to estimate the hitting time of a compact set, let say a ball, we can not see if the process is evolving towards the ball just in terms of the sign of $X_t\cdot V_t$, and the computations do not proceed as well as in dimension $1$.\\
Let now see that the two approaches carried out in dimension $1$ and in higher dimension cover different types of PDMPs.\\
\begin{itemize}
\item Let look at the process studied in \cite{FGM10}, \cite{FGM15} and \cite{BR}, whose generator has the following form :
\[ Lf(x,v) = v\partial_x f(x,v) + \lambda(x,v)\left( f(x,-v)-f(x,v)\right), \]
for $(x,v)\in\RR\times \{-1,+1\}$.\\
This process does not satisfy Assumption $(\mathcal{A}_4)$, because for $v'=-1$ we have 
\[\int_{-1}^1 G(\tilde{\alpha},v)Q(-1, \dt v)= G(\tilde{\alpha},1)= \frac{\underset{x\geq 0}{\inf}\lambda(x,1)}{\underset{x\geq 0}{\inf}\lambda(x,1)-\alpha }>1, \]
whereas it is still ergodic. Indeed, its ergodicity has been prooved in \cite{BR} or \cite{FGM15} for instance, with the introduction of a Lyapunov function, and this can also be deduce from Theorem \ref{Theoprincipal}, if the jump-rate $\lambda$ satisfies the good assumptions.
\item In the $d$-dimensional case, Assumption $\Hquatre$ assume a kind of monotony of the jump rate, which is not necessary in Assumption $(\mathcal{A}_4)$ in the one-dimensional case. We can thus construct a particular jump rate which, associated to the kernel $Q$, satisfies Assumption $(\mathcal{A}_4)$, but does not verify Assumption $\Hquatre$. Let for instance consider the case
\[ Lf(x,v) = v\partial_x f(x,v) + (2\mathbf{1}_{\mathrm{sgn}(x)v\geq -\frac{1}{2}}+\frac{1}{2}\mathbf{1}_{\mathrm{sgn}(x)v<-\frac{1}{2}})\frac{1}{2}\int_{-1}^1\left( f(x,v')-f(x,v)\right)\dt v', \]
for $(x,v)\in \RR\times [-1,1]$.\\
This process obviously does not satisfies Assumption $\Hquatre$ since for all $\theta_*\in [0,1]$, and all $\Delta>0$, $\inf_{\left\{ \mathrm{sgn}(x)v \geq \theta_*, \normx \geq \Delta \right\}}\lambda(x,v)=2=\sup_{\{\mathrm{sgn}(x)v \leq 0\}} \lambda (x,v)$.\\
Nevertheless, Assumption $(\mathcal{A}_4)$ is verified for this process. Indeed, let $\alpha\in \left( 0,\underset{x\geq 0,v\in (0,1]}{\inf}\frac{\lambda(x,v)}{v}\right) $, we have:
\begin{align*}
\frac{1}{2}&\int_{-1}^1 G(\alpha,v)\dt v\\
&= \frac{1}{2}\int_{-1}^{-\frac{1}{2}}\frac{\underset{x\geq 0}{\sup}\lambda(x,v)}{\underset{x\geq 0}{\sup}\lambda(x,v)-\alpha v} \dt v + \frac{1}{2}\int_{-\frac{1}{2}}^{-0}\frac{\underset{x\geq 0}{\sup}\lambda(x,v)}{\underset{x\geq 0}{\sup}\lambda(x,v)-\alpha v} \dt v + \frac{1}{2}\int_0^1 \frac{\underset{x\geq 0}{\inf}\lambda(x,v)}{\underset{x\geq 0}{\inf}\lambda(x,v)-\alpha v}\dt v\\
&= \frac{1}{2}\int_{-1}^{-\frac{1}{2}}\frac{\frac{1}{2}}{\frac{1}{2}-\alpha v}\dt v + \frac{1}{2}\int_{-\frac{1}{2}}^1 \frac{2}{2-\alpha v}\dt v\\
&= \frac{1}{4\alpha}\log \left(\frac{\frac{1}{2}+\alpha}{\frac{1}{2}+\frac{\alpha}{2}} \right) + \frac{1}{\alpha}\log \left( \frac{2+\frac{\alpha}{2}}{2-\alpha} \right) .
\end{align*}
A curve sketching shows that there exists an interval $ I_* \subset\left[ 0,\underset{x\geq 0,v\in (0,1]}{\inf}\frac{\lambda(x,v)}{v}\right)$ and a constant $J_*\in (0,1)$ such that for all $\alpha\in I_*$, $\frac{1}{2} \int_{-1}^1 G(\alpha,v)\dt v\leq J_*$, i.e. Assumption $(\mathcal{A}_4)$ is satisfied.
\end{itemize}

\paragraph{Acknowledgements.} The author wants to thank Alain Durmus, Arnaud Guillin and Pierre Monmarch\'e (\cite{DGM}) for fruitful discussions about the Lypaunov function introduced in the paper, and also H\'el\`ene Gu\'erin and Florent Malrieu for their help. This work was supported by the Agence Nationale de la Recherche project PIECE 12-JS01-0006-01.

\bibliographystyle{plain}
\bibliography{biblio}

\end{document}